\documentclass[11pt]{amsart}

\usepackage{
amsfonts,
amsmath,
latexsym,
amssymb,
enumerate,
mathrsfs,
color,
}

\usepackage[matrix,arrow]{xy}

\newcommand{\labbel}[1]{\label{#1} [[{\bf #1}]]}  
\renewcommand{\labbel}{\label}

\newtheorem{theorem}{Theorem}[section]
\newtheorem{lemma}[theorem]{Lemma}

\newtheorem{claim}[theorem]{Claim} 
\newtheorem*{claim*}{Claim}

\newtheorem*{theorem*}{Theorem}
\newtheorem*{proposition*}{Proposition}
\newtheorem*{corollary*}{Corollary}
\newtheorem*{lemma*}{Lemma}
\newtheorem*{scholion*}{Scholion}

\theoremstyle{definition}
\newtheorem{definition}[theorem]{Definition}

\theoremstyle{remark}
\newtheorem{remark}[theorem]{Remark}

\newtheorem*{remark*}{Remark}
\newtheorem*{remarks*}{Remarks}

\newtheorem*{observation*}{Observation}

\numberwithin{equation}{section}

\begin{document}

\title{Universal specialization semilattices}

\author{Paolo Lipparini} 

\email{lipparin@axp.mat.uniroma2.it}

\urladdr{http://www.mat.uniroma2.it/~lipparin}

\address{Dipartimento di Matematica\\Viale della  Ricerca
 Scientifica Non Additiva \\Universit\`a di Roma ``Tor Vergata'' 
\\I-00133 ROME ITALY}

\subjclass{Primary  06A15; Secondary 54A05;  06A12}

\keywords{Specialization semilattice; closure semilattice; closure space; 
universal extension}

\date{\today}

\thanks{Work performed under the auspices of G.N.S.A.G.A. 
Work partially supported by PRIN 2012 ``Logica, Modelli e Insiemi''.
The author acknowledges the MIUR Department Project awarded to the
Department of Mathematics, University of Rome Tor Vergata, CUP
E83C18000100006.}

\begin{abstract}
A specialization semilattice is a structure which
can be embedded into 
$(\mathcal P(X), \cup, \sqsubseteq )$,
where 
 $X$ is a topological space,
$ x \sqsubseteq y$ means $x \subseteq Ky$,
   for $x,y \subseteq  X$, and $K$ is closure in $X$.
Specialization semilattices and posets
appear as auxiliary structures in many 
disparate scientific fields, even unrelated to topology.

In general, closure is not expressible in a specialization semilattice.
On the other hand, we show that every 
 specialization semilattice can be canonically embedded into a
``principal'' specialization semilattice 
in which closure can be actually defined.
 \end{abstract}

\maketitle  

\section{Introduction} \labbel{intro} 

Due to the importance of the notion of \emph{closure}
 in mathematics, both in the topological sense and in
 the sense of \emph{hull}, \emph{generated by}\dots, 
it is interesting to study \emph{closure
spaces}  and related structures \cite{E}.
For example, a \emph{specialization semilattice} \cite{mtt} 
is a join-semilattice together with a  binary relation
$ \sqsubseteq $ such that 
\begin{align}
\labbel{s1}    \tag{S1}
 & a \leq b  \Rightarrow  a \sqsubseteq b, 
\\
\labbel{s2}    \tag{S2}
& a \sqsubseteq b \ \&\   b \sqsubseteq c \Rightarrow 
 a \sqsubseteq c, 
\\
\labbel{s3}    \tag{S3}
 &a \sqsubseteq b  \ \&\  a_1 \sqsubseteq b
 \Rightarrow 
 a \vee a_1 \sqsubseteq b, 
\end{align}
for all $a,b,c, a_1$. Thus $ \sqsubseteq $ is a preorder
coarser than the order $\leq$ induced by $\vee$, 
and furthermore the compatibility condition \eqref{s3} is satisfied. 

The main motivating example
for the notion is the following.
If $X$ is a topological space  with closure $K$,  
then   $(\mathcal P(X), \cup, \sqsubseteq )$ is a 
 specialization semilattice,
where $ \sqsubseteq $ is defined by
\begin{equation*}  
a \sqsubseteq b \quad \text{ if } \quad  a \subseteq Kb, 
\end{equation*}     
for all $a, b \subseteq X$.

Continuous functions between topological spaces 
are exactly those functions for which 
the corresponding direct image function preserves 
the specialization structure.
We have proved in  \cite[Theorem 4.10]{mtt}
that every specialization semilattice can be embedded into
the specialization semilattice associated  as above to some topological
space. In a sense, specialization semilattices represent
those point-free topological aspects which are preserved under taking continuous
direct images---at least, in the sense of first-order
model theory.

 However, there are many more examples of specialization semilattices.
A classical example is inclusion modulo finite \cite{Bl}.
If $X$ is an infinite set and we let $ x \sqsubseteq y$
if $x \setminus y$ is finite, for $x,y \subseteq X$,
then $(\mathcal P(X), \cup, \sqsubseteq )$
is a specialization semilattice. 
In this example no recognizable
notion of  ``closure'' is present.
The above example implicitly 
involves a quotient structure, as we are
going to explain.

If $\varphi: \mathbf S \to \mathbf T $  is a semilattice homomorphism
 and we set $ a \sqsubseteq_ \varphi    b$ in $S$ 
when $\varphi(a) \leq _{_\mathbf T}  \varphi (b)$, then 
$\mathbf S$ 
acquires the structure of a specialization 
semilattice.  
This quotient construction appears 
in many disparate settings, in various fields 
and with a wide range of applications.
The case of inclusion modulo finite  corresponds to considering
$\sqsubseteq_ \varphi $ as introduced above
when $\varphi$  is   the projection
from the standard Boolean algebra on  $\mathcal P(X)$
to its quotient modulo the ideal of finite sets.

Various similar structures appear, e.~g.,  in theoretical studies 
related to measure theory \cite{L},
algebraic logic \cite[Subsection 3.1]{GT} and even theoretical physics \cite{KP}.
Henceforth specialization semilattices are an interesting object
of study.
See \cite{mtt,mttlib} for further examples and details.

We now discuss the connection between specialization and closure
in more detail.   
A \emph{closure operation} on a semilattice
is a unary operation $K$ which is \emph{extensive},
 \emph{idempotent} and \emph{isotone},
namely, $x \leq Kx$,  $KKx=Kx$ 
hold for all elements $x$ of the poset,
and furthermore 
$x \leq y$ implies $Kx \leq Ky$.   
A \emph{closure semilattice} is a join-semilattice endowed with a closure operation.
Notice that the definition of a closure operation can be 
already given in the
setting of posets, where, as usual, 
\emph{poset} is a shorthand
for \emph{partially ordered set}.
On the other hand, an \emph{additive} closure operation
on a join-semilattice is required to satisfy also 
$K(x \vee y) =Kx \vee Ky$. This notion makes sense
only in the setting of structures which have at least a semilattice
operation.
Unless otherwise explicitly mentioned, here we shall
\emph{not} assume that closure operations are additive.
See \cite[Section 3]{E} for  a detailed study of closure posets
and semilattices,
with many applications.

Similar to the case of topologies, a closure operation on a poset induces
a \emph{specialization $ \sqsubseteq $} as follows:
$ x \sqsubseteq y$ if $x \leq Ky$.
Thus if $(P, \vee, K)$ is a closure semilattice,
then  $(P, \leq, \sqsubseteq )$ is a specialization semilattice. 
From the latter structure we can retrieve the original closure $K$: indeed,
$Kx$ is the $\leq$-largest element $y$ such that $ y \sqsubseteq x$.  
However, as shown by the example of inclusion modulo finite, 
such a largest element does not necessarily exist
  in an arbitrary specialization semilattice.
If the specialization semilattice $\mathbf S$  is such that, for every $x \in S $,
there exists the  largest element $y \in S$ such that $ y \sqsubseteq x$,
then $\mathbf S$ is said to be \emph{principal}. It can be shown
that there is a bijective correspondence between closure semilattices and
principal specialization semilattices. See Remark \ref{psc} below.  

In this note we show that every specialization semilattice 
has a canonical ``free'' extension into a principal specialization semilattice.
In particular, every specialization semilattice can be embedded into
the specialization reduct of some closure semilattice.
The somewhat simpler  case of  additive closure semilattices
 has been treated in 
\cite{mttlib}.

\section{Preliminaries} \labbel{prel} 

The definitions of a specialization and of  a
closure semilattice have been given in the introduction.
\emph{Homomorphisms} 
and \emph{embeddings} between 
such structures are always intended
in the classical model-\hspace{0 pt}theoretical sense \cite{H}.
In detail, a homomorphism
of specialization semilattices
is a semilattice homomorphism $\eta$
such that $ a \sqsubseteq b$ implies $\eta(a) \sqsubseteq  \eta(b)$
and an embedding is an injective homomorphism such that 
also the converse holds.
A homomorphism of closure semilattices 
is a semilattice homomorphism satisfying
$ \eta (Ka)= K \eta (a)$. 

If $\mathbf S$ is a specialization semilattice,
$a \in S$ and the set
$S_a=\{ b \in S \mid  b \sqsubseteq a \}$
has a $\leq$-maximum, such a maximum shall be denoted
by $Ka$ and shall be called the \emph{closure} of $a$.   
We require $Ka$ to be the maximum
of $S_a$, not just a supremum, namely, we 
actually require $Ka \sqsubseteq a$.  An
element $b$ of a specialization semilattice is called a
\emph{closure} if $b=Ka$, for some $a \in S$.  
As mentioned in the introduction, in general, $Ka$ need not exist
in an arbitrary specialization semilattice.
If $Ka$ exists for every  
$a \in S$, then $\mathbf S$ shall be called a
\emph{principal} specialization semilattice.
Homomorphisms of specialization semilattices
do not necessarily preserve closures.
If $\eta$ is a homomorphism between two \emph{principal}
specialization semilattices, then $\eta$ is a \emph{$K$-homomorphism} 
if  $ \eta (Ka)= K \eta (a)$, for every $a$. Thus $K$-homomorphisms
are actually homomorphisms for the associated
closure semilattices.

\begin{remark} \labbel{psc}    
(a) There is a bijective correspondence between 
principal specialization semilattices and closure semilattices.
As we mentioned, if $\mathbf  C$ is a closure semilattice, then
setting $ a \sqsubseteq b$
if $a \leq Kb$ makes $\mathbf  C$ 
a specialization semilattice.
Then $K$  turns out to 
be closure also
   in the  sense of specialization semilattices.
Conversely, if $\mathbf S$ is a \emph{principal}
specialization semilattice, then closure $K$, as defined above,
is actually a closure operation, hence $\mathbf S$ can be endowed with
the structure of a closure semilattice. 
See \cite[Section 3.1]{E}, in particular,
\cite[Proposition 3.9]{E} for details.     
 
On the other hand, the notions of homomorphism
are distinct. A homomorphism of closure semilattices
is a $K$-homomorphism of the corresponding specialization semilattices,
in particular,  a homomorphism of specialization semilattices.
The converse does not necessarily hold,
as mentioned above.

(b) In any closure poset, and hence in any
principal specialization semilattice, the following
identity holds.
 \begin{equation}\labbel{top}   
K(a_1 \vee \dots \vee a_r \vee Kb_1 \vee \dots \vee Kb_s)
= K(a_1  \vee \dots \vee a_r \vee b_1 \vee \dots \vee b_s)
  \end{equation}   
 for all $a_1, \dots, b_s$. This is a classical result,
at least in the case of closure spaces.
See \cite[Remark 2.1(b)]{mttlib} for the case of two summands and
\cite{sapimpap} for a proof in the general case.

(c) If $a$ and $b$ are elements of  some specialization semilattice 
and both $Ka$ and  $Kb$ exist, then
$Ka \leq Kb$ if and only if $ a \sqsubseteq b$.
See \cite[Remark 2.1(c)]{mttlib} for a proof.
In particular, in a principal specialization semilattice,
$Ka=Kb$ if and only if both $a \sqsubseteq b$ and  
$b \sqsubseteq a$. 

(d) From \eqref{s1} and reflexivity of $\leq$ it follows that 
\begin{align} 
\labbel{s4}    \tag{S4} 
 & a  \sqsubseteq a,
 \end{align}  
for every $a $ in $ S$. 

(e) \cite[Lemma 4.1]{mtt} 
If $\mathbf S$ is a specialization poset,
$b \in S$ and  $Kb$ exists, then, for every $a \in S$,
the following conditions are equivalent.
 \begin{enumerate}[(i)]
   \item
$ a \sqsubseteq b $;
\item
$ a \leq Kb$;
\item
$ a \sqsubseteq Kb $.
   \end{enumerate} 
The equivalence of (ii) and (iii) shows that if $Kb$
exists, then $KKb$ exists, as well, and in fact
$KKb=Kb$.   
\end{remark}

\section{Free principal extensions} \labbel{secimp} 

The existence of ``universal''
extensions and morphisms like
the ones we are going to construct
 follows from abstract categorical arguments;
see, e.~g., \cite[Lemma 4.1]{mttlib}.
On the other hand, an explicit description of
universal objects is sometimes very hard to find.
In the terminology from   \cite[Section 4]{mttlib}
here we are going to construct the universal 
object corresponding to case (C2).

\begin{remark} \labbel{rough}
Roughly, we work on the ``free''
semilattice extension  $ \widetilde { \mathbf   S} $
 of $\mathbf S$ obtained by adding
a  set of new elements
$\{ \, Ka \mid a \in S \,\}$,
subject to the further conditions
\begin{equation*}
\text{$a \leq Ka \leq  Kb$, whenever $a \sqsubseteq_{_{ \mathbf   S}} b$ 
 \quad (compare Remark \ref{psc}(c))}
\end{equation*} 
(here and below, the ``old'' relations and operations 
will be denoted with the subscript of the parent structure, while the
``new'' ones
will be unsubscripted).
Of course, we want to define $ \sqsubseteq $ 
 in $ \widetilde { \mathbf   S} $
in such a way that the new element $Ka$
is actually the closure of $a$. 
By transitivity of $\leq$, and if we want $K$
obey its defining property in a principal specialization semilattice,
 then  we necessarily have
  \begin{enumerate}[$\bullet$]
    \item  
$a \leq c \vee Kd_1 \vee \dots \vee Kd_h$, whenever 
there are elements $d^*_1 \sqsubseteq_{_{ \mathbf   S}} d_1$, \dots, 
$d^*_h \sqsubseteq_{_{ \mathbf   S}} d_h$ in $\mathbf S$ such that 
$a \leq_{_{ \mathbf   S}} c \vee_{_{ \mathbf   S}}
 d^*_1 \vee_{_{ \mathbf   S}} \dots \vee_{_{ \mathbf   S}} d^*_h$, and 
\item
$Kb \leq c \vee Kd_1 \vee \dots \vee Kd_h$, whenever 
there is $j \leq k$ such that   $b \sqsubseteq_{_{ \mathbf   S}} d_i$.
  \end{enumerate}

The above considerations justify  clauses
(a) - (b) in Definition \ref{und2} below.
Compare Remark \ref{*} below.

Notice that the  construction hinted above adds
a new closure of $a$ in the extension, even when $a$ has already
a closure in $\mathbf S$.   See Section \ref{pec} 
for a more involved construction in which existing closures
(or, more generally, some specified set of closures) are preserved.
\end{remark}   
 
\begin{definition} \labbel{sum}
If $S$
is any set,  
let $\mathbf S^{{<} \omega }$
be the semilattice 
of the finite subsets of $S$,
with the operation of union.
 \end{definition}   

\begin{definition} \labbel{und2}   
Suppose that 
$\mathbf S=(S, \vee_{_{ \mathbf   S}}, \sqsubseteq _{_{ \mathbf   S}})$
 is a specialization semilattice.
On the  product
$ S \times  S^{{<} \omega }$ 
define the following relations
  \begin{enumerate}[(a)] 
\item
 $(a, \{b_1, b_2, \dots, b_h\}) \precsim 
(c,\{d_1, d_2, \dots, d_k\})$ if
   \begin{enumerate}[({a}1)]  
  \item 
there is a way of choosing in $S$,
for every $j \leq k$,  
some $d_j^* \sqsubseteq_{_{ \mathbf   S}} d_j$
in such a way  that
 $a \leq_{_{ \mathbf   S}} c
 \vee_{_{ \mathbf   S}} d_1^* 
 \vee_{_{ \mathbf   S}} d_2^* 
\vee_{_{ \mathbf   S}} \dots 
\vee_{_{ \mathbf   S}} d_k^*$, and
   \item 
for every $i \leq h$, 
there is $j \leq k$ such that   
$b_i \sqsubseteq_{_{ \mathbf   S}} d_j$.
 \end{enumerate}
Clause (a1) is intended in the sense that we must have  $a \leq_{_{ \mathbf   S}} c $, 
when $k=0$, that is, when $\{d_1, d_2, \dots, d_k\}$ is empty.
\item
 $(a, \{b_1, b_2, \dots, b_h\}) \sim 
(c,\{d_1, d_2, \dots, d_k\})$ if both

 $(a, \{b_1, b_2, \dots, b_h\}) \precsim 
(c,\{d_1, d_2, \dots, d_k\})$ and

 $(c,\{d_1, d_2, \dots, d_k\}) \precsim 
(a, \{b_1, b_2, \dots, b_h\}) $.
  \end{enumerate}

We shall prove in Lemma \ref{corre2}
that $\sim$ is an equivalence relation.   

Let 
$ \widetilde {  S} $
be the quotient of  
$ S \times S^{{<} \omega }$ 
under the equivalence relation $\sim$.

Define $K: \widetilde {   S} \to \widetilde {   S}$
by
\begin{equation*}   
K[a, \{b_1, \dots, b_h\}] =
 [a, \{a \vee_{_{ \mathbf   S}} b_1 \vee_{_{ \mathbf   S}}
 \dots \vee_{_{ \mathbf   S}} b_h  \} ],
  \end{equation*}    
where $[a, \{b_1,  \dots, b_h\}]$ 
denotes the $ \sim$-class of the pair $(a, \{b_1,  \dots, b_h\})$.

In Lemma \ref{corre2} we shall  prove that $K$ is well-defined
and that  $\widetilde {   S}$ naturally inherits
a semilattice operation $  \vee$ from the semilattice product
$( S, \vee_{_{ \mathbf   S}}) \times \mathbf S^{{<} \omega }$.

Define $ \sqsubseteq $ on   $\widetilde {   S}$ by
 $[a, \{b_1,  \dots, b_h\}] \sqsubseteq
[c, \{d_1,  \dots, d_k\}]$ if 
 $[a, \{b_1,  \dots, b_h\}] \allowbreak \leq
K[c, \{d_1,  \dots, d_k\}]$, 
 where $\leq$ 
is the order induced by $   \vee$
and let $\widetilde {   \mathbf S} = (\widetilde { S},   \vee, \sqsubseteq ) $,
$\widetilde {   \mathbf S}' = (\widetilde { S},   \vee, K) $.

Finally, define
$\upsilon_{_\mathbf S} :  S \to \widetilde { S}$ 
by 
$\upsilon_{_\mathbf S} (a)= [a,  \emptyset ]$.
\end{definition}

\begin{remark} \labbel{*} 
We intuitively think
of  $[a, \{b_1,  \dots, b_h\}]$ as $a \vee Kb_1 \vee \dots \vee  Kb_h$,
 where $Kb_1, \dots , Kb_h$ 
are the ``new''  closures we need to introduce.
Compare Remark \ref{rough}.
In particular,  $[a, \emptyset ]$ corresponds to $a$
and $[b, \{ b \}]$ corresponds to a  new element $Kb$.  
 \end{remark}   

\begin{theorem} \labbel{univt2}
Suppose that  $\mathbf S$ is a specialization semilattice
and let $ \widetilde {  \mathbf S}$
and $\upsilon_{_\mathbf S} $
  be  as in Definition \ref{und2}.
Then the following statements hold.
  \begin{enumerate}   
 \item 
$ \widetilde {  \mathbf S}$ is
a principal 
 specialization semilattice.  
\item  
$\upsilon_{_\mathbf S} $ is an
 embedding of $\mathbf S$ 
 into $\widetilde {  \mathbf S}$.
\item  
The pair $ (\widetilde {  \mathbf S}, \upsilon_{_\mathbf S})$
has the following universal property.

For every  principal 
 specialization semilattice
$\mathbf  T$  
 and every homomorphism
 $ \eta : \mathbf S \to \mathbf  T$, there
is a unique  $K$-{\hspace{0 pt}}homomorphism  
 $ \widetilde{\eta} : \widetilde{\mathbf S} \to \mathbf  T$
such that
$\eta = \upsilon_{_\mathbf S} \circ \widetilde{\eta}$. 
\begin{equation*}
\xymatrix{
	{\mathbf S}  \ar[rd]_{\eta}
 \ar[r]^{\upsilon_{_\mathbf S}}
 &\widetilde{\mathbf  S} \ar@{-->}[d]^{\widetilde{\eta}}\\
	 &{\mathbf  T}}
   \end{equation*}    

\item
Suppose that  $\mathbf U$
is another specialization semilattice 
and $\psi : \mathbf S \to \mathbf U$ 
is a 
homomorphism. Then 
 there is a unique
 $K$-\hspace{0 pt}homomorphism
$\widetilde{\psi}: \widetilde{\mathbf S} \to \widetilde{\mathbf U}$
making the following diagram commute:

\begin{equation*}
\xymatrix{
	{\mathbf S} \ar[d]_{\psi} \ar[r]^{\upsilon_{_\mathbf S}}
 &\widetilde{\mathbf S} \ar@{-->}[d]^{\widetilde{\psi}}\\
	{\mathbf U} \ar[r]^{\upsilon_{_\mathbf U}}
 &{\widetilde{\mathbf U}}}
   \end{equation*}    
 \end{enumerate} 
 \end{theorem}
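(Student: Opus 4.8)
The plan is to take the four statements in order, with statement (3) carrying the real content and (4) falling out as an immediate corollary. Throughout I would lean on Lemma \ref{corre2} for the facts that $\sim$ is an equivalence relation and that $K$ and $\vee$ descend to $\widetilde S$, as well as on \eqref{top} and the characterizations in Remark \ref{psc}. The first thing to record is a clean description of the induced order: $[a,B]\leq[c,D]$ in $\widetilde S$ if and only if $(a,B)\precsim(c,D)$. Indeed $[a,B]\leq[c,D]$ means $(a\vee_{_{\mathbf S}}c,B\cup D)\sim(c,D)$, the reverse $\precsim$ being automatic, and a short rewriting using \eqref{s4} shows $(a\vee_{_{\mathbf S}}c,B\cup D)\precsim(c,D)$ is equivalent to $(a,B)\precsim(c,D)$. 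With this identification the preorder $\sqsubseteq$ becomes transparent to compute.

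For (1) I would verify that $K$ is a closure operation on the semilattice $\widetilde S$ and then invoke Remark \ref{psc}(a), which says exactly that this turns $\widetilde S$ into a principal specialization semilattice whose closure is $K$ and whose specialization is the one of Definition \ref{und2}. Extensivity $[a,B]\leq K[a,B]$ and idempotency $KK[a,B]=K[a,B]$ are immediate from the formula for $K$ (for idempotency note $a\vee_{_{\mathbf S}}(a\vee_{_{\mathbf S}}b_1\vee_{_{\mathbf S}}\dots)=a\vee_{_{\mathbf S}}b_1\vee_{_{\mathbf S}}\dots$). Isotonicity is the only real computation: assuming $(a,B)\precsim(c,D)$ I would set $\beta=a\vee_{_{\mathbf S}}\bigvee B$ and $\delta=c\vee_{_{\mathbf S}}\bigvee D$ and check $(a,\{\beta\})\precsim(c,\{\delta\})$; clause (a2) there amounts to $\beta\sqsubseteq_{_{\mathbf S}}\delta$, which follows from \eqref{s1}, \eqref{s2}, \eqref{s3} applied to the data furnished by the original (a1)--(a2), and clause (a1) is met by taking the single witness to be the join of the old witnesses, again by \eqref{s3}. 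As for (2), that $\upsilon_{_{\mathbf S}}$ is a semilattice embedding is immediate and injectivity follows because $(a,\emptyset)\sim(a',\emptyset)$ forces $a\leq_{_{\mathbf S}}a'$ and $a'\leq_{_{\mathbf S}}a$; preservation of $\sqsubseteq$ is a one-line check, while the interesting reflecting direction, $(a,\emptyset)\precsim(a',\{a'\})\Rightarrow a\sqsubseteq_{_{\mathbf S}}a'$, comes from the witness $d_1^*\sqsubseteq_{_{\mathbf S}}a'$ with $a\leq_{_{\mathbf S}}a'\vee_{_{\mathbf S}}d_1^*$: one gets $a'\vee_{_{\mathbf S}}d_1^*\sqsubseteq_{_{\mathbf S}}a'$ by \eqref{s3} (using \eqref{s4}), and then \eqref{s1} and \eqref{s2} give $a\sqsubseteq_{_{\mathbf S}}a'$.

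Statement (3) is the heart. The key preliminary observation is that $\upsilon_{_{\mathbf S}}(S)$ generates $\widetilde S$ under $\vee$ and $K$; concretely $[a,\{b_1,\dots,b_h\}]=\upsilon_{_{\mathbf S}}(a)\vee K\upsilon_{_{\mathbf S}}(b_1)\vee\dots\vee K\upsilon_{_{\mathbf S}}(b_h)$, which one checks by the same $\sim$-manipulation used in the first paragraph. Hence any $K$-homomorphism $\widetilde\eta$ making the triangle commute (that is, with $\eta$ recovered by precomposing $\widetilde\eta$ with $\upsilon_{_{\mathbf S}}$) is forced to satisfy $\widetilde\eta([a,\{b_1,\dots,b_h\}])=\eta(a)\vee K\eta(b_1)\vee\dots\vee K\eta(b_h)$, which yields uniqueness and dictates the definition. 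The main obstacle is well-definedness of this formula: I must show $(a,B)\precsim(c,D)$ implies $\eta(a)\vee\bigvee_{b\in B}K\eta(b)\leq\eta(c)\vee\bigvee_{d\in D}K\eta(d)$ in $\mathbf T$. This is where the hypotheses that $\mathbf T$ is principal and $\eta$ a homomorphism enter: applying $\eta$ to the inequality in (a1) and using that each $\eta(d_j^*)\sqsubseteq_{_{\mathbf T}}\eta(d_j)$ gives $\eta(d_j^*)\leq_{_{\mathbf T}}K\eta(d_j)$ by Remark \ref{psc}(e), while (a2) gives each $K\eta(b_i)\leq_{_{\mathbf T}}K\eta(d_j)$ by Remark \ref{psc}(c); this is precisely why clauses (a1)--(a2) are shaped as they are. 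That $\widetilde\eta$ is a semilattice homomorphism is then clear, and the identity $\widetilde\eta(K[a,B])=K\widetilde\eta([a,B])$ reduces, via \eqref{top} applied in $\mathbf T$, to $K(\eta(a)\vee\bigvee\eta(b_i))$ on both sides.

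Finally, (4) follows by applying (3) to the target $\mathbf T=\widetilde{\mathbf U}$, which is principal by (1), and to the homomorphism $\eta=\upsilon_{_{\mathbf U}}\circ\psi:\mathbf S\to\widetilde{\mathbf U}$: the resulting $\widetilde\eta$ is the desired $\widetilde\psi$, and both existence-uniqueness and commutativity of the square are exactly what (3) delivers. The one genuinely delicate point in the whole argument is the well-definedness in (3) (and, to a lesser extent, the isotonicity of $K$ in (1)); everything else is bookkeeping with \eqref{s1}--\eqref{s4} and the order description from the first paragraph.
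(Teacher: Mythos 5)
Your proposal is correct and follows essentially the same route as the paper's own proof: verify that $K$ is a closure operation on $\widetilde{S}$ (extensivity/idempotency from the formula, isotonicity from the $\precsim$-computation) and invoke Remark \ref{psc}(a) for (1), the same embedding argument for (2), uniqueness of $\widetilde{\eta}$ via generation of $\widetilde{S}$ by $\upsilon_{_\mathbf S}(S)$ under $\vee$ and $K$, well-definedness via clauses (a1)--(a2) together with Remark \ref{psc}(c)/(e), the $K$-homomorphism property via \eqref{top}, and (4) as a corollary of (3). The only differences are cosmetic (e.g., re-deriving the order description \eqref{30}, which is already part of Lemma \ref{corre2}(iii)).
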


\begin{lemma} \labbel{corre2}
Assume the notation and the definitions in \ref{und2}. 
  \begin{enumerate}[(i)]    
\item
 The relation $\precsim$ from Definition \ref{und2}(a)
is reflexive and transitive on 
$ S \times  S^{{<} \omega }$, hence
 $ \sim$ from \ref{und2}(b) is an equivalence relation. 
\item
The operation $K$ is
 well-defined on the $ \sim$-equivalence classes.
\item
The relation $ \sim$ is a semilattice congruence
on the semilattice  $ ( S, \vee) \times \mathbf S^{{<} \omega }$,
hence the quotient $\widetilde{S}$
inherits  a semilattice structure from
the semilattice $( S, \vee) \times \mathbf S^{{<} \omega }$.
The join operation on the quotient is given by
\begin{equation} \labbel{3-1}
[a, \{b_1,  \dots, b_h\}]  \vee [c, \{d_1,  \dots, d_k\}] =
[a \vee _{_{ \mathbf   S}}  c ,  \{b_1,  \dots, b_h, d_1,  \dots, d_k \}], 
 \end{equation}    
and, moreover, the following holds:
\begin{equation}\labbel{30}     
\begin{aligned} ~     
[a, \{b_1,  \dots, b_h\}] & \leq [c, \{d_1,  \dots, d_k\}] 
\text{ if and only if}  
\\
(a, \{b_1,  \dots, b_h\}) & \precsim (c, \{d_1,  \dots, d_k\}).
 \end{aligned}
 \end{equation}    
 \end{enumerate} 
 \end{lemma}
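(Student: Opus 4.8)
The plan is to derive the entire lemma from the single fact that $\precsim$ is a preorder which is monotone for the product join; everything else is bookkeeping. I would prove part (i) first. Reflexivity of $\precsim$ is immediate: given $(a,\{b_1,\dots,b_h\})$, in clause (a1) choose $d_j^\ast=b_j$ and in clause (a2) take $j=i$, using $b_i\sqsubseteq_{\mathbf S}b_i$ from \eqref{s4}. Once transitivity is established, $\sim=\mathord{\precsim}\cap\mathord{\succsim}$ is automatically an equivalence relation, which is all part (i) claims.

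For transitivity assume $(a,B)\precsim(c,D)\precsim(e,F)$, writing $B=\{b_i\}$, $D=\{d_j\}$, $F=\{f_l\}$. Clause (a2) transfers immediately: each $b_i$ lies $\sqsubseteq_{\mathbf S}$-below some $d_j$, which lies $\sqsubseteq_{\mathbf S}$-below some $f_l$, so $b_i\sqsubseteq_{\mathbf S}f_l$ by \eqref{s2}. The crux, which I expect to be the main obstacle, is clause (a1): I am handed witnesses $d_j^\ast\sqsubseteq_{\mathbf S}d_j$ with $a\le_{\mathbf S}c\vee_{\mathbf S}\bigvee_j d_j^\ast$ and witnesses $f_l^\dagger\sqsubseteq_{\mathbf S}f_l$ with $c\le_{\mathbf S}e\vee_{\mathbf S}\bigvee_l f_l^\dagger$, and must produce witnesses over $F$ for $a$. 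The device is to push the intermediate witnesses forward: clause (a2) of the second relation gives, for each $j$, an index $l(j)$ with $d_j\sqsubseteq_{\mathbf S}f_{l(j)}$, hence $d_j^\ast\sqsubseteq_{\mathbf S}f_{l(j)}$ by \eqref{s2}. I then set $f_l^\ast$ to be the join of $f_l^\dagger$ with all $d_j^\ast$ for which $l(j)=l$; this is still $\sqsubseteq_{\mathbf S}f_l$ by \eqref{s3}. Substituting the bound on $c$ into the bound on $a$ and absorbing $d_j^\ast\le_{\mathbf S}f_{l(j)}^\ast$ and $f_l^\dagger\le_{\mathbf S}f_l^\ast$ gives $a\le_{\mathbf S}e\vee_{\mathbf S}\bigvee_l f_l^\ast$, as needed. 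I would watch the empty-set conventions attached to clause (a1) at each step.

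For part (ii), put $\beta=a\vee_{\mathbf S}\bigvee_i b_i$ and $\delta=c\vee_{\mathbf S}\bigvee_j d_j$; I must check that $(a,B)\precsim(c,D)$ forces $(a,\{\beta\})\precsim(c,\{\delta\})$, the converse being symmetric, so that $K$ respects $\sim$. From $a\le_{\mathbf S}c\vee_{\mathbf S}\bigvee_j d_j^\ast$ and the facts $c\sqsubseteq_{\mathbf S}\delta$ and $d_j^\ast\sqsubseteq_{\mathbf S}d_j\le_{\mathbf S}\delta$ I get $a\sqsubseteq_{\mathbf S}\delta$ via \eqref{s1}, \eqref{s2}, \eqref{s3}; together with $b_i\sqsubseteq_{\mathbf S}\delta$ for each $i$, another application of \eqref{s3} yields $\beta\sqsubseteq_{\mathbf S}\delta$, which is clause (a2). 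For clause (a1) the single witness $\delta^\ast=c\vee_{\mathbf S}\bigvee_j d_j^\ast\sqsubseteq_{\mathbf S}\delta$ satisfies $a\le_{\mathbf S}c\vee_{\mathbf S}\delta^\ast$ (and covers the case $D=\varnothing$ cleanly).

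For part (iii) I would first verify that $\precsim$ is monotone for the product join: given $(a,B)\precsim(a',B')$ and $(c,D)\precsim(c',D')$, the union of the two witness families serves clause (a1) for $(a\vee_{\mathbf S}c,B\cup D)\precsim(a'\vee_{\mathbf S}c',B'\cup D')$ (taking joins by \eqref{s3} for any element lying in both $B'$ and $D'$), while clause (a2) is read off summand by summand. Applying this in both directions shows $\sim$ is a congruence, and the quotient join is then forced to be \eqref{3-1}. For \eqref{30}, since $\le$ is the join-induced order, $[a,B]\le[c,D]$ means $(a\vee_{\mathbf S}c,B\cup D)\sim(c,D)$; the inclusion $(c,D)\precsim(a\vee_{\mathbf S}c,B\cup D)$ is the trivial ``adding data'' direction (identity witnesses via \eqref{s4}), the reverse inclusion is monotonicity applied to $(a,B)\precsim(c,D)$ and reflexivity of $(c,D)$, and conversely $(a,B)\precsim(a\vee_{\mathbf S}c,B\cup D)$ is again trivial, so transitivity recovers $(a,B)\precsim(c,D)$ from the order relation.
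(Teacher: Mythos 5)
Your proposal is correct and follows essentially the same route as the paper: the same witness-merging device (pushing the $d_j^*$ forward along $d_j\sqsubseteq_{_{\mathbf S}}f_{l(j)}$ and absorbing them into the $f$-witnesses via \eqref{s3}) for transitivity, the same reduction of well-definedness of $K$ to showing $(a,B)\precsim(c,D)$ implies $(a,\{\beta\})\precsim(c,\{\delta\})$, and compatibility of $\precsim$ with the product join for part (iii). Your derivation of \eqref{30} from monotonicity, transitivity and the trivial ``adding data'' relations, rather than the paper's direct unpacking of the definition, and your explicit handling of elements lying in both second components and of the $D=\emptyset$ case, are only cosmetic variations on the same argument.
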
 

\begin{proof}  
(i) To prove that 
$\precsim$ is reflexive, notice that
if $a=c$, then condition (a1) is verified 
by an arbitrary choice of the
$d_j^*$s, e.~g., $d_j^*= d_j$, for every index $j$.
If  $ \{b_1, b_2, \dots, b_h\}= 
\{d_1, d_2, \dots, d_k\}$, then 
condition (a2) is verified because of \eqref{s4}
from Remark \ref{psc}(d). 
In the case $h=0$, that is,  
$\{b_1, b_2, \dots, b_h\} = \emptyset $,
the condition is vacuously verified by the conventions
about quantifying over the empty set.

Now we deal with transitivity of $\precsim$, so suppose that
\begin{equation*} 
(a, \{b_1, \dots, b_h\}) \precsim 
(c,\{d_1,  \dots, d_k\})\precsim 
(e,\{f_1, \dots, f_\ell\}).
 \end{equation*}     
By (a1), 
 $a \leq_{_{ \mathbf   S}} c
 \vee_{_{ \mathbf   S}} d_1^* 
\vee_{_{ \mathbf   S}} \dots 
\vee_{_{ \mathbf   S}} d_k^*$, for some
$d_1^* \sqsubseteq_{_{ \mathbf   S}} d_1$, 
\dots,   $d_k^* \sqsubseteq_{_{ \mathbf   S}} d_k$,
and 
$c \leq_{_{ \mathbf   S}} e
 \vee_{_{ \mathbf   S}} f_1^* 
\vee_{_{ \mathbf   S}} \dots 
\vee_{_{ \mathbf   S}} f_\ell^*$, for some
$f_1^* \sqsubseteq_{_{ \mathbf   S}} f_1$, 
\dots,   $f_\ell^* \sqsubseteq_{_{ \mathbf   S}} f_\ell$,
thus 
\begin{equation} \labbel{parap}  
a \leq_{_{ \mathbf   S}}  e
 \vee_{_{ \mathbf   S}} f_1^* 
\vee_{_{ \mathbf   S}} \dots 
\vee_{_{ \mathbf   S}} f_\ell^*
 \vee_{_{ \mathbf   S}} d_1^* 
\vee_{_{ \mathbf   S}} \dots 
\vee_{_{ \mathbf   S}} d_k^* .
 \end{equation}     
By (a2), for every 
$j \leq k$, there is $m \leq \ell$
such that $d_j \sqsubseteq_{_{ \mathbf   S}} f_m$.
For each $m \leq \ell$, let $D_m$ be the set of those
indices $j \leq k$ 
 such that  $d_j \sqsubseteq_{_{ \mathbf   S}} f_m$,
thus $D_1 \cup \dots \cup D_ \ell = \{ 1,2,\dots, k \} $.  
Since   $d_j^* \sqsubseteq_{_{ \mathbf   S}} d_j$, for every $j \leq k$,
then, by \eqref{s2}, we get   
$d_j^* \sqsubseteq_{_{ \mathbf   S}} f_m$, for every $j \in D_m$.
By a finite iteration of \eqref{s3},
setting $f_m^{**} =
f^*_m \vee_{_{ \mathbf   S}} \bigvee_{_{ \mathbf   S}}
 \{ \, d_j^* \mid j \leq k,  j \in D_m \, \} $, for $m \leq \ell$, 
we get $f_m^{**} \sqsubseteq_{_{ \mathbf   S}} f_m$,
since also   $f^*_m \sqsubseteq_{_{ \mathbf   S}} f_m$, by assumption
(if $D_m = \emptyset $, simply set  $f_m^{**} =
f^*_m $ and use \eqref{s4}). Notice that $\bigvee_{_{ \mathbf   S}}
 \{ \, d_j^* \mid  j \in D_m \, \}$
is a finite join.
Since $D_1 \cup \dots \cup D_ \ell = \{ 1,2,\dots, k \} $,
then 
\begin{equation} \labbel{parapip}   
 f_1^{**} 
\vee_{_{ \mathbf   S}} \dots 
\vee_{_{ \mathbf   S}} f_\ell^{**}
=
 f_1^* 
\vee_{_{ \mathbf   S}} \dots 
\vee_{_{ \mathbf   S}} f_\ell^*
 \vee_{_{ \mathbf   S}} d_1^* 
\vee_{_{ \mathbf   S}} \dots 
\vee_{_{ \mathbf   S}} d_k^* ,
 \end{equation}     
  thus
$a \leq_{_{ \mathbf   S}} e  \vee_{_{ \mathbf   S}} f_1^{**} 
\vee_{_{ \mathbf   S}} \dots 
\vee_{_{ \mathbf   S}} f_\ell^{**}$, by \eqref{parap}.      
This means that condition $(a1)$ holds, when applied to 
$(a, \{b_1, b_2, \dots, b_h\}) \precsim 
(e,\{f_1, f_2, \dots, f_\ell\})$.  
Condition (a2) holds, as well, 
since,  
for every $i \leq h$, 
there is $j \leq k$ such that   
$b_i \sqsubseteq_{_{ \mathbf   S}} d_j$
and, 
for every 
$j \leq k$, there is $m \leq \ell$
such that $d_j \sqsubseteq_{_{ \mathbf   S}} f_m$, hence we get
$b_i \sqsubseteq_{_{ \mathbf   S}} f_m$
by \eqref{s2}.

Having proved that 
$\precsim $ is reflexive and transitive, then so is
$\sim$; hence $\sim$ is an equivalence
relation, being symmetric by definition.

(ii) It is enough to show that
\begin{equation} \labbel{mum} 
\text{if  
$(a, \{b_1,  \dots, b_h\}) \precsim
(c, \{d_1,  \dots, d_k\})$, then
$ (a, \{b\} ) \precsim (c, \{ d  \} )$,}
 \end{equation}   
where we have set
$b= a \vee_{_{ \mathbf   S}} b_1 \vee_{_{ \mathbf   S}}
 \dots \vee_{_{ \mathbf   S}} b_h$ and
$d= c \vee_{_{ \mathbf   S}} d_1 \vee_{_{ \mathbf   S}} 
\dots \vee_{_{ \mathbf   S}} d_k  $. 

Indeed, from  \eqref{mum}, together with the symmetrical condition,
we get that 
\begin{multline*}
[a, \{b_1,  \dots, b_h\}] =[c, \{d_1,  \dots, d_k\}]  
\text{  implies }  
\\
K[a, \{b_1,  \dots, b_h\}] = [a, \{b \} ] =
[c, \{d \}] =
K[c, \{d_1,  \dots, d_k\}]. 
 \end{multline*}

So let us assume that 
$(a, \{b_1,  \dots, b_h\}) \precsim
(c, \{d_1,  \dots, d_k\})$.
By (a1), 
there are elements 
$d_1^* \sqsubseteq_{_{ \mathbf   S}} d_1$, 
\dots,   $d_k^* \sqsubseteq_{_{ \mathbf   S}} d_k$
such that $a \leq _{_{ \mathbf   S}} c \vee_{_{ \mathbf   S}} d^*$,
where we let $d^*= d_1^* \vee_{_{ \mathbf   S}}
 \dots \vee_{_{ \mathbf   S}} d_k^*$. 
By \eqref{s1}, for every $j \leq k$,  $d_j \sqsubseteq_{_{ \mathbf   S}} d$,
hence   $d_j^* \sqsubseteq_{_{ \mathbf   S}} d$,
by \eqref{s2}. 
Iterating \eqref{s3}, we get  
$d^* \sqsubseteq_{_{ \mathbf   S}} d$,
thus $a \leq _{_{ \mathbf   S}} c \vee_{_{ \mathbf   S}} d^*$
gives condition (a1) for the inequality
$(a, \{ b \} ) \precsim (c, \{ d \} ) $. 

Again 
since  $d_j \sqsubseteq_{_{ \mathbf   S}} d$,
for every $j \leq k$, and since, for every $i \leq h$, 
there is $j \leq k$ such that   
$b_i \sqsubseteq_{_{ \mathbf   S}} d_j$, by (a2),
then, 
\begin{equation}\labbel{pirl}    
\text{$b_i \sqsubseteq_{_{ \mathbf   S}} d$, for every $i \leq h$}
   \end{equation}
by \eqref{s2}.  
From $a \leq _{_{ \mathbf   S}} c \vee_{_{ \mathbf   S}} d^*$
and \eqref{s1} we get 
$a \sqsubseteq _{_{ \mathbf   S}} c \vee_{_{ \mathbf   S}} d^*$;
again by \eqref{s1} we have $c \sqsubseteq _{_{ \mathbf   S}} d$,
thus    $c \vee_{_{ \mathbf   S}} d^* \sqsubseteq _{_{ \mathbf   S}} d$
by \eqref{s3} and $d^* \sqsubseteq _{_{ \mathbf   S}} d$.
 Then $ a \sqsubseteq _{_{ \mathbf   S}} d$ follows  from
\eqref{s2}. Since, for every $i \leq h$, $b_i \sqsubseteq_{_{ \mathbf   S}} d$,
then $b \sqsubseteq_{_{ \mathbf   S}} d$, by iterating \eqref{s3}.
We have proved condition (a2) for   $(a, \{ b \} ) \precsim (c, \{ d \} ) $,
hence \eqref{mum} holds and this, as  we mentioned,
is enough to get (ii).

(iii) By symmetry, it is enough to show that
if  
\begin{equation}\labbel{uffa} 
   (a, \{b_1,  \dots, b_h\}) \precsim
(c, \{d_1,  \dots, d_k\}),
  \end{equation} 
  then   
\begin{equation*} 
(a, \{b_1,  \dots, b_h\})  \vee (e, \{f_1,  \dots, f_\ell\}) \precsim
(c, \{d_1,  \dots, d_k\}) \vee (e, \{f_1,  \dots, f_\ell\}),
 \end{equation*}     
  that is, 
\begin{equation}\labbel{uffi}      
(a \vee_{_{ \mathbf   S}} e, \{b_1,  \dots, b_h, f_1,  \dots, f_\ell\}) ) \precsim
(c \vee_{_{ \mathbf   S}} e, \{d_1,  \dots, d_k, f_1,  \dots, f_\ell\}).
 \end{equation}
From \eqref{uffa},
as witnessed by  $a \leq_{_{ \mathbf   S}} c
 \vee_{_{ \mathbf   S}} d_1^* 
\vee_{_{ \mathbf   S}} \dots 
\vee_{_{ \mathbf   S}} d_k^*$, for
the appropriate $d_j^*$s, we get
 $a \vee_{_{ \mathbf   S}} e
 \leq_{_{ \mathbf   S}} c \vee_{_{ \mathbf   S}} e
 \vee_{_{ \mathbf   S}} d_1^* 
\vee_{_{ \mathbf   S}} \dots 
\vee_{_{ \mathbf   S}} d_k^*
 \vee_{_{ \mathbf   S}} f_1^* 
\vee_{_{ \mathbf   S}} \dots 
\vee_{_{ \mathbf   S}} f_\ell^*$,
 by choosing, for example,  $f_m^*=f_m$, 
hence condition (a1) is verified for \eqref{uffi}.
Condition  (a2) for \eqref{uffi}
follows from \eqref{s4} and from condition (a2) for \eqref{uffa}.

To prove \eqref{30},
notice that
$[a, \{b_1,  \dots, b_h\}]  \leq [c, \{d_1,  \dots, d_k\}]$ 
 means
\begin{align}  
\nonumber 
[a, \{b_1, \allowbreak   \dots, b_h\}]  \vee  [c, \{d_1,  \dots, d_k\}] &=
[c, \{d_1,  \dots, d_k\}], \quad
\text{that is,}
\\
\nonumber
 [a \vee_{_{ \mathbf   S}} c, \{b_1,  \dots, b_h, d_1,  \dots, d_k\}] &=
[c, \{d_1,  \dots, d_k\}], \quad
\text{that is,}
\\
\labbel{parp}    (a \vee_{_{ \mathbf   S}} c, \{b_1,  \dots, b_h, d_1,  \dots, d_k\})
&\precsim
(c, \{d_1,  \dots, d_k\}),
 \end{align}
since 
$ (c, \{d_1,  \dots, d_k\})
\precsim 
(a \vee_{_{ \mathbf   S}} c, \{b_1,  \dots, b_h, d_1,  \dots, d_k\})$
is obvious.
Now, \eqref{parp} is equivalent to  
$(a, \{b_1,  \dots, b_h\})  \precsim (c, \{d_1,  \dots, d_k\})$,
since, for every element $X \in S$,
  $a \vee_{_{ \mathbf   S}} c 
\leq_{_{ \mathbf   S}} c \vee_{_{ \mathbf   S}} X$  
if and only if 
 $a   \leq_{_{ \mathbf   S}} c \vee_{_{ \mathbf   S}} X$.
Moreover, for every $j \leq k$,
$ d_j \sqsubseteq_{_{ \mathbf   S}} d_j$, by \eqref{s4}.    
\end{proof}

\begin{proof}[Proof of Theorem \ref{univt2}]
Definition \ref{und2} is justified
by Lemma \ref{corre2}.

\begin{claim} \labbel{cl2} 
$\widetilde {   \mathbf S}' = (\widetilde { S},   \vee, K) $
is  a closure semilattice.
 \end{claim}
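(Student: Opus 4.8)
The plan is to verify directly that the operation $K$ introduced in Definition \ref{und2} is extensive, isotone and idempotent with respect to the semilattice $(\widetilde{S}, \vee)$; since Lemma \ref{corre2} already establishes that $\sim$ is a congruence (so that $\widetilde{S}$ is a genuine join-semilattice), that $K$ is well-defined on $\sim$-classes, and that the induced order is computed by \eqref{30}, nothing further about the semilattice reduct needs checking. Throughout I write $b = a \vee_{_{\mathbf S}} b_1 \vee_{_{\mathbf S}} \dots \vee_{_{\mathbf S}} b_h$ and $d = c \vee_{_{\mathbf S}} d_1 \vee_{_{\mathbf S}} \dots \vee_{_{\mathbf S}} d_k$, so that $K[a, \{b_1, \dots, b_h\}] = [a, \{b\}]$ and $K[c, \{d_1, \dots, d_k\}] = [c, \{d\}]$.

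For extensivity I must show $[a, \{b_1, \dots, b_h\}] \leq [a, \{b\}]$, which by \eqref{30} amounts to $(a, \{b_1, \dots, b_h\}) \precsim (a, \{b\})$. Here clause (a1) is trivial: the right-hand pair has the single index $j=1$ with $d_1 = b$, and choosing $d_1^* = b$ (legitimate by \eqref{s4}) gives $a \leq_{_{\mathbf S}} a \vee_{_{\mathbf S}} b$. Clause (a2) is equally immediate, since each $b_i$ satisfies $b_i \leq_{_{\mathbf S}} b$ and hence $b_i \sqsubseteq_{_{\mathbf S}} b$ by \eqref{s1}. Thus $K$ is extensive.

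Isotonicity is essentially free, because it is exactly the content of \eqref{mum} from the proof of Lemma \ref{corre2}(ii): if $[a, \{b_1, \dots, b_h\}] \leq [c, \{d_1, \dots, d_k\}]$, then by \eqref{30} we have $(a, \{b_1, \dots, b_h\}) \precsim (c, \{d_1, \dots, d_k\})$, and \eqref{mum} yields $(a, \{b\}) \precsim (c, \{d\})$, i.e.\ $K[a, \{b_1, \dots, b_h\}] \leq K[c, \{d_1, \dots, d_k\}]$, again by \eqref{30}. Idempotence is a one-line computation on representatives: $KK[a, \{b_1, \dots, b_h\}] = K[a, \{b\}] = [a, \{a \vee_{_{\mathbf S}} b\}]$, and since $a \leq_{_{\mathbf S}} b$ we have $a \vee_{_{\mathbf S}} b = b$, so this equals $[a, \{b\}] = K[a, \{b_1, \dots, b_h\}]$.

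Having checked the three defining properties, $K$ is a closure operation on the join-semilattice $\widetilde{S}$, so $\widetilde{\mathbf S}'$ is a closure semilattice. I do not anticipate any real obstacle: the substantive combinatorial work (transitivity of $\precsim$ and the reduction \eqref{mum}) was already discharged in Lemma \ref{corre2}, and the only point worth flagging is the observation that isotonicity of $K$ is literally a restatement of \eqref{mum} once the order is read off via \eqref{30}.
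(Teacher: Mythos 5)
Your proof is correct and takes essentially the same route as the paper: it, too, relies on Lemma \ref{corre2} for the semilattice reduct and the well-definedness of $K$, checks extensivity via \eqref{30}, obtains isotony from \eqref{mum} combined with \eqref{30}, and verifies idempotence by the same one-line computation on representatives (the paper absorbs the repeated summand via $a \vee_{_{\mathbf S}} a = a$ where you use $a \vee_{_{\mathbf S}} b = b$; these are the same observation). The only difference is that you spell out the $\precsim$-verification behind extensivity, which the paper leaves implicit in its citation of \eqref{30}.
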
  

We have  shown in Lemma \ref{corre2}(iii)  that 
$(\widetilde { S},   \vee)$ is a semilattice; it remains to check
that $K$ is a closure operation. 
Indeed, 
\begin{align*}  
[a, \{b_1,  \dots, b_h\}]
 &\leq ^{\eqref{30}} [a, \{a \vee_{_{ \mathbf   S}} b_1 \vee_{_{ \mathbf   S}} \dots \vee_{_{ \mathbf   S}} b_h  \} ]
=K[a, \{b_1,  \dots, b_h\}]
\\
 KK[a, \{b_1,  \dots, b_h\}]&=
 K[a, \{a \vee_{_{ \mathbf   S}} b_1 \vee_{_{ \mathbf   S}} \dots \vee_{_{ \mathbf   S}} b_h  \}]
\\
&=[a, \{a \vee_{_{ \mathbf   S}} a \vee_{_{ \mathbf   S}} b_1 \vee_{_{ \mathbf   S}} \dots \vee_{_{ \mathbf   S}} b_h  \}]=K[a, \{b_1,  \dots, b_h\}] 
 \end{align*} 
 and isotony follows from \eqref{mum} and \eqref{30}.

Having proved the claim, Clause (1)
in Theorem \ref{univt2} follows 
from Remark \ref{psc}(a).
 
Now we prove (2).
We have
$\upsilon_{_\mathbf S} (a \vee_{_\mathbf S} b) =
 [a \vee_{_\mathbf S} b,  \emptyset   ]
= ^{\eqref{3-1}}  [a ,   \emptyset   ] \vee [b,  \emptyset   ]
 = \upsilon_{_\mathbf S} (a) \vee \upsilon_{_\mathbf S} ( b)$,
hence $\upsilon_{_\mathbf S} $ is a semilattice homomorphism.
Moreover, $\upsilon_{_\mathbf S} $ is injective, since
$\upsilon_{_\mathbf S} (a) =\upsilon_{_\mathbf S} ( c )$ 
means 
$(a,  \emptyset   ) \sim  (c,  \emptyset   )$
and, by Definition \ref{und2}(a1), this happens only if 
$a \leq_{_\mathbf S} c$ and $c \leq_{_\mathbf S} a$,
that is,  $a=c$.

Furthermore, 
if $ a \sqsubseteq_{_\mathbf S} d $ in $\mathbf S$,
then 
$(a ,  \emptyset ) \precsim (d ,  \{ d \} )$,
by taking $d^*=a$ in (a1),  thus
$\upsilon_{_\mathbf S} (a) = [a ,  \emptyset   ]
\leq ^{\eqref{30}} [d ,  \{ d \} ] = K[ d,  \{ \emptyset \} ]=
K\upsilon_{_\mathbf S} (d)$,
that is,  $\upsilon_{_\mathbf S} (a) \sqsubseteq 
\upsilon_{_\mathbf S} (d)$,
according to the definition  of  $ \sqsubseteq $  on
 $\widetilde{S}$  in Definition \ref{und2}.
This shows that $\upsilon_{_\mathbf S}$
is a $ \sqsubseteq $-homomorphism.

In fact, $\upsilon_{_\mathbf S}$ is an embedding, since from
$\upsilon_{_\mathbf S} (a) \sqsubseteq \upsilon_{_\mathbf S} (d)$,
that is, 
$[a, \emptyset  ] \leq  K[d, \emptyset ] =  [d, \{ d \} ] $,
we get $a \leq_{_\mathbf S} d \vee_{_\mathbf S} d^*$,
for some $d^*$ such that $d^* \sqsubseteq_{_\mathbf S} d$. Hence  
$a \sqsubseteq_{_\mathbf S}  d \vee_{_\mathbf S} d^* 
\sqsubseteq_{_\mathbf S} d$,
by \eqref{s4} and \eqref{s3},
thus $a \sqsubseteq_{_\mathbf S} d$, by \eqref{s2}.
This shows that $\upsilon_{_\mathbf S}$ is an embedding.

Having proved (2), we now deal with (3).
If $\eta: \mathbf S \to \mathbf T$
is a homomorphism and there exists 
$\widetilde{\eta}$ 
such that   
$\eta = \upsilon_{_\mathbf S} \circ \widetilde{\eta}$,
then
$\widetilde{\eta}([a, \emptyset ]) =
   \widetilde{\eta}(\upsilon_{_\mathbf S}(a))
 = \eta (a) $, for every $a \in S$.
If furthermore $ \widetilde{\eta}$
is a $K$-homomorphism, then 
$ \widetilde{\eta}([b, \{ b \}]) =   \widetilde{\eta}(K[b, \emptyset ])=
K_{_\mathbf T} \widetilde{\eta}([b, \emptyset ]) = K_{_\mathbf T} \eta (b)$.
Notice that we have assumed that
$\mathbf T$ is  principal, so that 
 $K _{_\mathbf T}\eta (b)$ does exist. 

Now notice that 
$(a, \{b_1, \dots, b_h   \}) \sim
(a \vee_{_\mathbf S}  b_1 \vee_{_\mathbf S} \dots
 \vee_{_\mathbf S}  b_h , \{b_1, \dots, b_h   \})$,
by \eqref{s4}, hence 
$[a, \{b_1, \dots, b_h   \}] =
[a \vee_{_\mathbf S}  b_1 \vee_{_\mathbf S} \dots 
\vee_{_\mathbf S}  b_h , \{b_1, \dots, b_h   \}]$.
Since $\widetilde{\eta}$ is supposed to be
a lattice homomorphism,
it follows that 
\begin{equation} \labbel{37}      
\begin{aligned} 
 &\widetilde{\eta}([a, \{b_1, \dots, b_h   \} ]) =
 \widetilde{\eta}([a \vee_{_\mathbf S}  b_1 \vee_{_\mathbf S} \dots
 \vee_{_\mathbf S}  b_h , \{b_1, \dots, b_h   \} ]) = ^{\eqref{3-1}} 
\\
& \widetilde{\eta}([a, \emptyset ] \vee [ b_1, \{b_1  \} ] \vee \dots
 \vee [ b_h , \{ b_h   \} ]) =
\\
&\widetilde{\eta}([a, \emptyset ]) \vee_{_\mathbf T}
\widetilde{\eta}([b_1 \{ b_1 \} ]) \vee_{_\mathbf T} \dots
 \vee_{_\mathbf T} 
\widetilde{\eta}([b_h \{ b_h \} ])=
\\
&\eta (a) \vee_{_\mathbf T} K_{_\mathbf T} \eta (b_1) 
\vee_{_\mathbf T} \dots \vee_{_\mathbf T} K_{_\mathbf T} \eta (b_h). 
 \end{aligned}
 \end{equation} 
Hence if $   \widetilde{\eta}$  
exists it is unique.

It is then enough to show that the above condition
$\widetilde{\eta}([a, \{b_1, \dots, b_h   \} ]) \allowbreak =
 \eta (a) \vee_{_\mathbf T} K_{_\mathbf T} \eta (b_1) \vee_{_\mathbf T}
 \dots \vee_{_\mathbf T} K_{_\mathbf T} \eta (b_h)$
actually determines a $K$-\hspace{0 pt}homomorphism
$   \widetilde{\eta}$ from 
$\widetilde{ \mathbf S}$  
to $\mathbf T$. Indeed, \eqref{37} gives 
$\widetilde{\eta}([a, \emptyset ]) = \eta (a)$,
so that we actually have
$\eta = \upsilon_{_\mathbf S} \circ \widetilde{\eta}$.  

First, we need to check that 
if  
$(a, \{b_1,  \dots, b_h\}) \sim
(c, \{d_1,  \dots, d_k\})$, then 
$\eta (a) \vee_{_\mathbf T} K_{_\mathbf T} \eta (b_1) \vee_{_\mathbf T} \dots
 \vee_{_\mathbf T} K_{_\mathbf T} \eta (b_h)
= \eta (c) \vee_{_\mathbf T} K_{_\mathbf T} \eta (d_1) \vee_{_\mathbf T}
 \dots \vee_{_\mathbf T} K_{_\mathbf T} \eta (d_k)$, so that
$   \widetilde{\eta}$ is well-defined. 

Hence assume that clauses (a1) and (a2) in Definition \ref{und2} hold.
From $d_1^* \sqsubseteq_{_\mathbf S} d_1$ 
we get $ \eta (d_1^*) \sqsubseteq_{_\mathbf T} \eta (d_1)$,
since $\eta$ is a homomorphism, hence $ K_{_\mathbf T}\eta (d_1^*)
\allowbreak  \leq_{_\mathbf T} K_{_\mathbf T}\eta (d_1)$, by Remark \ref{psc}(c), 
since $\mathbf T$ is assumed to be principal. 
Similarly for the other indices.
Hence from $a \leq_{_\mathbf S} c \vee_{_\mathbf S} d_1^*
 \vee_{_\mathbf S} \dots \vee_{_\mathbf S} d_k^*$ 
given by (a1), we get
\begin{align*} 
 \eta (a) &\leq_{_\mathbf T} \eta (c) \vee_{_\mathbf T} \eta (d_1^*)
 \vee_{_\mathbf T} \dots \vee_{_\mathbf T} \eta (d_k^*)
\\
&\leq_{_\mathbf T} 
\eta (c) \vee_{_\mathbf T} K_{_\mathbf T}\eta (d_1^*)
 \vee_{_\mathbf T} \dots \vee_{_\mathbf T} K_{_\mathbf T}\eta (d_k^*)
\\
&\leq_{_\mathbf T} \eta (c) \vee_{_\mathbf T} K_{_\mathbf T}\eta (d_1) 
\vee_{_\mathbf T} \dots \vee_{_\mathbf T} K_{_\mathbf T}\eta (d_k),
\end{align*}    
since $\eta$ is a semilattice homomorphism.
By \ref{und2}(a2),
for every $i \leq h$, there is 
 $j \leq k$ such that  
  $b_i \sqsubseteq_{_\mathbf S} d_j$. As above, 
by Remark \ref{psc}(c), we get  
$ K_{_\mathbf T}\eta (b_i) \leq_{_\mathbf T} K_{_\mathbf T}\eta (d_j)$,
hence 
$ K_{_\mathbf T}\eta (b_i) \leq_{_\mathbf T} 
 \eta (c) \vee_{_\mathbf T} K_{_\mathbf T}\eta (d_1)
 \vee_{_\mathbf T} \dots \vee_{_\mathbf T} K_{_\mathbf T}\eta (d_k)$,
for every $i \leq h$. 
Thus
\begin{align*}  
\eta (a) \vee_{_\mathbf T} K_{_\mathbf T} \eta (b_1) \vee_{_\mathbf T}
 \dots \vee_{_\mathbf T} K_{_\mathbf T} \eta (b_h)
\leq_{_\mathbf T}
\\
 \eta (c) \vee_{_\mathbf T} K_{_\mathbf T}
 \eta (d_1) \vee_{_\mathbf T} \dots \vee_{_\mathbf T} K_{_\mathbf T} \eta (d_k).
\end{align*} 
The converse inequality is proved in the symmetrical way.
We have proved  that $   \widetilde{\eta}$ is well-defined.

We now check that 
$   \widetilde{\eta}$ is a
semilattice homomorphism. 
Indeed, 
\begin{align*} 
 & \widetilde{\eta}([a, \{b_1,  \dots, b_h\}])
  \vee_{_\mathbf T} \widetilde{\eta}([c, \{d_1,  \dots, d_k\}]) =
\\
 &\eta (a) \vee_{_\mathbf T} K_{_\mathbf T}
 \eta (b_1) \vee_{_\mathbf T} \dots \vee_{_\mathbf T}
 K_{_\mathbf T} \eta (b_h) \vee_{_\mathbf T}
\eta (c) \vee_{_\mathbf T}
K_{_\mathbf T} \eta (d_1)
 \vee_{_\mathbf T} \dots \vee_{_\mathbf T} K_{_\mathbf T} \eta (d_k)=
\\
 &
\eta( a \vee_{_\mathbf S} c)  \vee_{_\mathbf T}
  K_{_\mathbf T} \eta (b_1) \vee_{_\mathbf T} \dots
 \vee_{_\mathbf T} K_{_\mathbf T} \eta (b_h) 
 \vee_{_\mathbf T}
K_{_\mathbf T} \eta (d_1)
 \vee_{_\mathbf T} \dots \vee_{_\mathbf T} K_{_\mathbf T} \eta (d_k)=
\\
 &
\widetilde{\eta}([a \vee_{_\mathbf S} c ,  \{b_1,  \dots, b_h, d_1,  \dots, d_k \}])=
^{\eqref{3-1}} 
\\ 
& \widetilde{\eta}([a, \{b_1,  \dots, b_h\}]
  \vee [c, \{d_1,  \dots, d_k\}])  ,
 \end{align*}
where we have used the definition of
$\widetilde{\eta}$, the  assumption
that $\eta$ is a semilattice homomorphism
and equation \eqref{3-1}.

Finally, $   \widetilde{\eta}$ is a
$K$-homomorphism,
since
\begin{align*} 
 &\widetilde{\eta}(K[a, \{b_1,  \dots, b_h\}])=
 \widetilde{\eta}([a, \{a \vee_{_\mathbf S} b_1 \vee_{_\mathbf S} \dots
 \vee_{_\mathbf S} b_h  \}])=
\\
&\eta(a) \vee_{_\mathbf T} 
K_{_\mathbf T}\eta(a \vee_{_\mathbf S} b_1 \vee_{_\mathbf S} 
\dots \vee_{_\mathbf S} b_h) =
\\
 & K_{_\mathbf T}\eta(a \vee_{_\mathbf S} b_1 
\vee_{_\mathbf S} \dots \vee_{_\mathbf S} b_h) =
\\
&K_{_\mathbf T}(\eta(a) \vee_{_\mathbf T} \eta(b_1)
 \vee_{_\mathbf T} \dots \vee_{_\mathbf T} \eta(b_h))
= ^{\ref{psc}} 
\\
&K_{_\mathbf T}(\eta(a) \vee_{_\mathbf T} K_{_\mathbf T}\eta(b_1)
 \vee_{_\mathbf T} \dots \vee_{_\mathbf T} K_{_\mathbf T}\eta(b_h))
=
K_{_\mathbf T} \widetilde{\eta}([a, \{b_1,  \dots, b_h\}]),
\end{align*}   
where we have used 
the definitions of $K$ and $\widetilde{\eta}$,
the assumption that $\eta$ is a homomorphism of specialization semilattices,
the fact that $K_{_\mathbf T}$ is a closure operation  
 and
Remark \ref{psc}(b). 

Clause (4) follows from Clause (3)
applied to   $\eta = \psi \circ \upsilon_{_{\mathbf U} }$.
 \end{proof}

It is necessary to ask that 
 $\widetilde{\eta}$ is 
a $K$-homomorphism in Theorem \ref{univt2}(3);
compare a parallel observation shortly before 
Remark 3.4 in \cite{mttlib}.

On the other hand, $\eta$ is not required to
preserve existing closures in Theorem \ref{univt2}(3).
In the next section we shall deal with the situation in which
$\eta$ is supposed to  preserve a specified set of 
closures in $\mathbf S$.

\section{Preserving existing closures} \labbel{pec} 

Suppose now that we want to extend 
some specialization semilattice $\mathbf S$ 
in such a way that all the  existing closures---or
 just some set of closures---are preserved. 
So, let $\mathbf S$ be a specialization semilattice 
and let $Z \subseteq S$ be a set of closures of $\mathbf S$.
We say that a homomorphism $\eta:\mathbf S \to \mathbf T$ 
\emph{preserves closures in $Z$} if, whenever
$a \in S$ and the closure $K_{_\mathbf S}a$ exists and belongs to $Z$,
then the closure of $\eta(a)$ exists in $\mathbf T$
and moreover   $\eta(K_{_\mathbf S}a)= K_{_\mathbf T}\eta(a) $.

By Remark \ref{psc}(e),  $z$ is a closure if and only if  $Kz=z$.  
In particular, if $z \in Z \subseteq S$ and $\eta$ preserves closures in
 $Z$, then $ \eta (z)=\eta(K_{_\mathbf S}z)= K_{_\mathbf T}\eta(z) $,
thus $\eta (z)$ is a closure in $\mathbf T$.

\begin{theorem} \labbel{upres}
Suppose that $\mathbf S$ is a specialization semilattice 
and $Z \subseteq S$ is a set of closures in $\mathbf S$.
Then there are a principal specialization semilattice 
$\widetilde{\mathbf S}^Z$ 
and an embedding  $\upsilon_{_\mathbf S}^Z:
\mathbf S \to \widetilde{\mathbf S}^Z$
such that $\upsilon_{_\mathbf S}^Z$
preserves  closures in $Z$
and, moreover, the universal conditions (3) - (4) in Theorem \ref{univt2}
hold  with respect to $\widetilde{\mathbf S}^Z$ 
and $\upsilon_{_\mathbf S}^Z$, limited to those homomorphisms
$\eta$ and $\psi$ which preserve closures in $Z$.  
 \end{theorem}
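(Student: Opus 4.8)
The plan is to obtain $\widetilde{\mathbf S}^Z$ as a further quotient of the free principal extension $\widetilde{\mathbf S}$ of Theorem \ref{univt2}. Viewing $\widetilde{\mathbf S}' = (\widetilde S, \vee, K)$ as a closure semilattice (Claim \ref{cl2}), I would let $\approx$ be the smallest closure-semilattice congruence on $\widetilde{\mathbf S}'$ that identifies $[z, \{z\}]$ with $[z, \emptyset]$ for every $z \in Z$; such a least congruence exists since closure-semilattice congruences are closed under intersection. Because $z = K_{_\mathbf S}z$ for $z \in Z$ and $K[z,\emptyset] = [z, \{z\}]$, these identifications are precisely the requirement that each $\upsilon_{_\mathbf S}(z)$ become a fixed point of $K$. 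I then set $\widetilde{\mathbf S}^Z = \widetilde{\mathbf S}'/{\approx}$, which is again a closure semilattice, hence a principal specialization semilattice by Remark \ref{psc}(a); let $\pi$ be the quotient $K$-homomorphism and put $\upsilon_{_\mathbf S}^Z = \upsilon_{_\mathbf S} \circ \pi$.

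The universal conditions come almost for free from Theorem \ref{univt2}. Given a principal $\mathbf T$ and a homomorphism $\eta : \mathbf S \to \mathbf T$ preserving closures in $Z$, Theorem \ref{univt2}(3) supplies the unique $K$-homomorphism $\widetilde\eta : \widetilde{\mathbf S} \to \mathbf T$ with $\eta = \upsilon_{_\mathbf S}\circ\widetilde\eta$. I would check that $\widetilde\eta$ collapses $\approx$, for which it suffices that $\widetilde\eta([z,\{z\}]) = \widetilde\eta([z,\emptyset])$ for $z \in Z$: the right-hand side is $\eta(z)$, while the left-hand side equals $K_{_\mathbf T}\widetilde\eta([z,\emptyset]) = K_{_\mathbf T}\eta(z)$ since $\widetilde\eta$ is a $K$-homomorphism, and $K_{_\mathbf T}\eta(z) = \eta(K_{_\mathbf S}z) = \eta(z)$ because $z = K_{_\mathbf S}z \in Z$ and $\eta$ preserves closures in $Z$. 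Hence $\widetilde\eta$ factors as $\widetilde\eta = \pi\circ\widetilde\eta^Z$ through a $K$-homomorphism $\widetilde\eta^Z : \widetilde{\mathbf S}^Z \to \mathbf T$ with $\eta = \upsilon_{_\mathbf S}^Z\circ\widetilde\eta^Z$; uniqueness of $\widetilde\eta^Z$ follows from surjectivity of $\pi$ and the uniqueness clause of Theorem \ref{univt2}(3). Clause (4) then follows from Clause (3) exactly as in the proof of Theorem \ref{univt2}, applied to $\psi\circ\upsilon_{_\mathbf U}^{\psi(Z)}$; here $\psi(Z)$ is a set of closures of $\mathbf U$ by the observation preceding the statement, so $\widetilde{\mathbf U}^{\psi(Z)}$ is defined and the composite preserves closures in $Z$.

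Preservation of closures in $Z$ is a short computation. If $K_{_\mathbf S}a = z \in Z$, then $K\upsilon_{_\mathbf S}^Z(a) = \pi(K[a,\emptyset]) = \pi[a,\{a\}]$. A direct check with $\precsim$ (using $a \le_{_\mathbf S} z$, $a \sqsubseteq_{_\mathbf S} z$ and $z \sqsubseteq_{_\mathbf S} a$, all from $z = K_{_\mathbf S}a$ together with Remark \ref{psc}(e)) shows $[a,\{a\}] = [z,\{z\}]$ already in $\widetilde{\mathbf S}$; applying $\pi$ and the defining identification gives $\pi[z,\{z\}] = \pi[z,\emptyset] = \upsilon_{_\mathbf S}^Z(z)$. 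Thus $K\upsilon_{_\mathbf S}^Z(a) = \upsilon_{_\mathbf S}^Z(K_{_\mathbf S}a)$, as required.

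The crux is showing $\upsilon_{_\mathbf S}^Z$ is an \emph{embedding}, i.e.\ that $\pi$ does not over-collapse on $\upsilon_{_\mathbf S}(S)$; since $\upsilon_{_\mathbf S}$ is already an embedding by Theorem \ref{univt2}(2), everything reduces to a precise description of $\approx$. I would introduce the terminating, confluent rewriting $(a, B) \rightsquigarrow (a \vee_{_\mathbf S} K_{_\mathbf S}b,\, B\setminus\{b\})$, applied to each $b \in B$ whose closure $K_{_\mathbf S}b$ exists and lies in $Z$, and let the \emph{normal form} of $(a,B)$ be the result of deleting all such $b$. One sees each rewriting step is an $\approx$-consequence (via $[b,\{b\}] = [K_{_\mathbf S}b, \{K_{_\mathbf S}b\}] \approx [K_{_\mathbf S}b, \emptyset]$) and that $\sim\subseteq{\approx}$, so $(a,B)\approx(c,C)$ whenever their normal forms are $\sim$-related. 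The main obstacle is the converse: one must verify that the relation ``equal normal form up to $\sim$'' is itself a closure-semilattice congruence, whence it contains $\approx$ by minimality and the two relations coincide. Compatibility with $\vee$ is straightforward, since the normal form commutes with the pair join. Compatibility with $K$ is the delicate point, because $K$ mixes the two coordinates: writing $s = a \vee_{_\mathbf S}\bigvee B$ and $s^{+}$ for the corresponding join of the normal form, one shows $s$ and $s^{+}$ are $\sqsubseteq_{_\mathbf S}$-equivalent (each deleted $K_{_\mathbf S}b \sqsubseteq_{_\mathbf S} s$ by \eqref{s2}, so \eqref{s3} gives $s^{+}\sqsubseteq_{_\mathbf S}s$, while $s \le_{_\mathbf S} s^{+}$ gives the reverse), hence have the same $\mathbf S$-closure when it exists, by Remark \ref{psc}(c); a short case analysis on whether that common closure lies in $Z$ then shows $K(a,B)$ and $K$ of the normal form have $\sim$-equal normal forms. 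Granting that $\approx$ is exactly ``equal normal form up to $\sim$'', the pairs $(a,\emptyset)$ are already in normal form, and evaluating $\precsim$ on normal forms recovers $a \le_{_\mathbf S} c$ for injectivity and $a \sqsubseteq_{_\mathbf S} c$ for reflection of $\sqsubseteq$ (in the latter, when $K_{_\mathbf S}c = z \in Z$ one uses the equivalence $a \le_{_\mathbf S} z \Leftrightarrow a \sqsubseteq_{_\mathbf S} c$ of Remark \ref{psc}(e)). This establishes that $\upsilon_{_\mathbf S}^Z$ is an embedding and completes the proof.
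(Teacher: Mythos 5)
Your overall strategy---realizing $\widetilde{\mathbf S}^Z$ as the quotient of $\widetilde{\mathbf S}'$ by the least closure-semilattice congruence $\approx$ collapsing $[z,\{z\}]$ and $[z,\emptyset]$ for $z \in Z$---is a legitimate alternative to the paper's construction (which instead defines the relation $\precsim^Z$ explicitly on pairs in Definition \ref{und2z} and verifies its properties in Lemma \ref{corre2Z}), and your treatment of the universal property and of preservation of closures in $Z$ is correct: factoring $\widetilde\eta$ through $\pi$ via minimality of $\approx$ is clean, and $[a,\{a\}]=[z,\{z\}]$ does hold in $\widetilde{\mathbf S}$ when $K_{_\mathbf S}a = z$. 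However, the step you yourself call the crux---the embedding---rests on a false claim: $\approx$ is \emph{not} the relation ``equal normal form up to $\sim$''; that relation is not a closure-semilattice congruence, and in fact is not even well defined on $\widetilde{S}$, because your normal form is not invariant under $\sim$.

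Concretely, take $\mathbf S = (\mathcal P(\mathbb{N}), \cup, \sqsubseteq)$ with $x \sqsubseteq y$ iff $x \setminus y$ is finite, and $Z = \{\mathbb{N}\}$ (here $K_{_\mathbf S}a$ exists iff $a$ is cofinite, and then equals $\mathbb{N}$); let $E$, $O$ be the sets of even and odd numbers, neither of which has a closure in $\mathbf S$. Then $(\mathbb{N}, \{\mathbb{N}\}) \sim (\mathbb{N}, \{\mathbb{N}, E, O\})$, since $E \sqsubseteq_{_\mathbf S} \mathbb{N}$ and $O \sqsubseteq_{_\mathbf S} \mathbb{N}$ settle clause (a2) of Definition \ref{und2}; yet the normal forms of these two representatives of the \emph{same} element of $\widetilde{S}$ are $(\mathbb{N}, \emptyset)$ and $(\mathbb{N}, \{E,O\})$, which are not $\sim$-related, because (a2) fails for $E$ against an empty second coordinate. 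Your rewriting deletes the absorbing element $\mathbb{N}$ and leaves $E$, $O$ stranded, although in the other representative they were never present. Moreover, $\approx$ genuinely identifies more than your relation: joining the generator $[\mathbb{N},\{\mathbb{N}\}] \approx [\mathbb{N},\emptyset]$ with $[\emptyset,\{E,O\}]$ gives, via \eqref{3-1}, $[\mathbb{N},\{\mathbb{N},E,O\}] \approx [\mathbb{N},\{E,O\}]$, and since $[\mathbb{N},\{\mathbb{N},E,O\}] = [\mathbb{N},\{\mathbb{N}\}] \approx [\mathbb{N},\emptyset]$, transitivity yields $[\mathbb{N},\{E,O\}] \approx [\mathbb{N},\emptyset]$, although no representatives of these two elements have $\sim$-related normal forms. (This identification is the correct one: the paper obtains it through clause (c2)($\beta$) of Definition \ref{und2z}, since $E \leq_{_\mathbf S} \mathbb{N} \in Z$ and $(\mathbb{N},\emptyset) \precsim (\mathbb{N},\emptyset)$.) So ``equal normal form up to $\sim$'' is strictly finer than $\approx$, your minimality argument cannot close, and the verification that $\upsilon_{_\mathbf S}^Z$ is an embedding collapses. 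The underlying point is that whether an element $b$ of the second coordinate may be discharged cannot be decided representative-by-representative from the existence of $K_{_\mathbf S}b$ in $Z$: it must be decided relative to the comparison target, namely $b$ is dischargeable against $(c,\{d_1,\dots,d_k\})$ whenever $b \leq_{_\mathbf S} z$ for \emph{some} $z \in Z$ with $(z,\emptyset) \precsim (c,\{d_1,\dots,d_k\})$. This relativized clause is exactly the paper's ($\beta$), and proving that the resulting relation is transitive and compatible with $K$ (the paper's Lemma \ref{corre2Z}) is precisely the work your proposal is missing.
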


Before giving the proof of Theorem \ref{upres} 
we need to introduce some generalizations of  Definition \ref{und2}
and  of Lemma \ref{corre2}.

Recall the definition of the relations
$ \precsim$ and $ \sim$ from Definition \ref{und2}.
Notice that 
 $(a, \emptyset ) \precsim 
(c,\{d_1, d_2, \dots, d_k\})$ holds if and only if
clause (a1) holds in Definition \ref{und2}.

\begin{definition} \labbel{und2z}   
Suppose that  $Z \subseteq S$ is a set of closures in $\mathbf S$.
On 
$ S \times  S^{{<} \omega }$ 
define the following relations:
  \begin{enumerate}[(c)] 
   \item  
 $(a, \{b_1, b_2, \dots, b_h\}) \precsim^Z 
(c,\{d_1, d_2, \dots, d_k\})$ if
   \begin{enumerate}[({c}1)]  
  \item 
 $(a, \emptyset ) \precsim 
(c,\{d_1, d_2, \dots, d_k\})$, and
   \item 
for every $i \leq h$, either
  \begin{enumerate}[($ \alpha $)] 
   \item  
there is $j \leq k$ such that   
$b_i \sqsubseteq_{_{ \mathbf   S}} d_j$ 
(this is the same as clause (a2) in Definition \ref{und2}), or
\item[($ \beta  $)] 
there is $z \in Z$ such that 
$b_i \leq_{_{ \mathbf   S}} z$ and
  $(z, \emptyset ) \precsim 
(c,\{d_1, d_2, \dots, d_k\})$.
\end{enumerate}
 \end{enumerate}

\item[(d)]
 $(a, \{b_1, b_2, \dots, b_h\}) \sim^Z 
(c,\{d_1, d_2, \dots, d_k\})$ if both

 $(a, \{b_1, b_2, \dots, b_h\}) \precsim^Z 
(c,\{d_1, d_2, \dots, d_k\})$ and

 $(c,\{d_1, d_2, \dots, d_k\}) \precsim^Z
(a, \{b_1, b_2, \dots, b_h\}) $.
  \end{enumerate}

We shall soon prove 
that $\sim^Z$ is an equivalence relation.   
Let 
$ \widetilde {  S}^Z $
be the quotient of  
$ S \times S^{{<} \omega }$ 
under  $\sim^Z$.
Define $K^Z: \widetilde {   S}^Z \to \widetilde {   S}^Z$
by
\begin{equation*}   
K^Z[a, \{b_1, \dots, b_h\}]^Z =
 [a, \{a \vee_{_{ \mathbf   S}} b_1 \vee_{_{ \mathbf   S}}
 \dots \vee_{_{ \mathbf   S}} b_h  \} ]^Z,
  \end{equation*}    
where $[a, \{b_1,  \dots, b_h\}]^Z$ 
denotes the $ \sim^Z$-class of the pair $(a, \{b_1,  \dots, b_h\})$.    
 
Notice that when $Z= \emptyset $ we get back
the notions introduced in Definition \ref{und2}.

Notice also that the $ \sim$-class $[a, \emptyset ]$  of  $(a, \emptyset )$
is $\{ (a, \emptyset ) \}$, while   the
$ \sim^Z$-class   of  $(a, \emptyset )$
might contain more than one pair, for example, if $z \in Z$,
then $(z, \{ z \} ) \in [z, \emptyset ]^Z$.
In the course of the proof of Theorem \ref{upres} 
we shall show a slightly more general fact:
if $K_{_\mathbf S}a=z \in Z$, then
$[z, \emptyset]^Z =
[a,a]^Z$. 
\end{definition}   

\begin{lemma} \labbel{corre2Z}
The analogue of Lemma \ref{corre2} holds for
 $\precsim^Z$, $ \sim^Z$ and $ K^Z$.
 \end{lemma}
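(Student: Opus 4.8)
The plan is to prove Lemma \ref{corre2Z} by mimicking the proof of Lemma \ref{corre2}, treating the new disjunctive clause ($\beta$) as a controlled extension of the old clause (a2). The essential observation is that $\precsim^Z$ differs from $\precsim$ only in how the second coordinate is handled: (c1) is literally (a1) applied with an empty left-hand multiset, and (c2) offers the alternative ($\beta$) alongside the old (a2)=($\alpha$). So I would organize the proof as four parts mirroring Lemma \ref{corre2}(i)--(iii), and in each part I would first dispatch the cases where only clause ($\alpha$) is invoked (these are verbatim the old arguments) and then handle the genuinely new cases triggered by ($\beta$).

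For \emph{reflexivity} the old proof goes through unchanged, since whenever $b_i$ equals some $d_j$ we may use ($\alpha$) via \eqref{s4}. For \emph{transitivity}, suppose $(a,\{b_i\})\precsim^Z(c,\{d_j\})\precsim^Z(e,\{f_m\})$. The (c1) part is exactly the (a1) computation of Lemma \ref{corre2}(i), producing the required $f_m^{**}$. For (c2), fix $i\le h$ and split on which clause witnessed $b_i$ in the first step. If ($\alpha$) gave some $d_j$ with $b_i\sqsubseteq_{_{\mathbf S}}d_j$, then I examine how $d_j$ itself is witnessed in the second step: if $d_j$ is handled by ($\alpha$) we get $b_i\sqsubseteq_{_{\mathbf S}}f_m$ by \eqref{s2} as before; if $d_j$ is handled by ($\beta$) through some $z\in Z$ with $d_j\le_{_{\mathbf S}}z$ and $(z,\emptyset)\precsim(e,\{f_m\})$, then $b_i\sqsubseteq_{_{\mathbf S}}d_j\le_{_{\mathbf S}}z$ gives $b_i\le_{_{\mathbf S}}z$ via \eqref{s1} and \eqref{s2}, so ($\beta$) witnesses $b_i$ for the composite. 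If instead ($\beta$) already gave $b_i$ in the first step via $z\in Z$ with $(z,\emptyset)\precsim(c,\{d_j\})$, then I compose the two (a1)-type inequalities: from $(z,\emptyset)\precsim(c,\{d_j\})$ and $(c,\emptyset)\precsim(e,\{f_m\})$ (the latter is the (c1) of the second hypothesis) I obtain $(z,\emptyset)\precsim(e,\{f_m\})$ by the transitivity of $\precsim$ already established in Lemma \ref{corre2}(i), so ($\beta$) still witnesses $b_i$. This case analysis is the main obstacle, because the ($\beta$) witness from the first pair must be transported across the second pair, and the cleanest way is to reuse the transitivity of the plain $\precsim$ applied to the pairs $(z,\emptyset)$.

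Having transitivity and the definitional symmetry of $\sim^Z$, the relation $\sim^Z$ is an equivalence. For \emph{well-definedness of $K^Z$} I would reprove the analogue of \eqref{mum}: assuming $(a,\{b_i\})\precsim^Z(c,\{d_j\})$, show $(a,\{b\})\precsim^Z(c,\{d\})$ where $b=a\vee_{_{\mathbf S}}\bigvee b_i$ and $d=c\vee_{_{\mathbf S}}\bigvee d_j$. Clause (c1) is identical to the old computation. For (c2), with a single left index, I must produce one witness for $b$. Running the old argument I obtain $a\sqsubseteq_{_{\mathbf S}}d$ and $b_i\sqsubseteq_{_{\mathbf S}}d$ whenever $b_i$ was handled by ($\alpha$); when $b_i$ is handled by ($\beta$) via $z\in Z$, I note $(z,\emptyset)\precsim(c,\{d_j\})$ yields $z\sqsubseteq_{_{\mathbf S}}d$ by the same reasoning that gave $a\sqsubseteq_{_{\mathbf S}}d$ in Lemma \ref{corre2}(ii), and then $b_i\le_{_{\mathbf S}}z\sqsubseteq_{_{\mathbf S}}d$ gives $b_i\sqsubseteq_{_{\mathbf S}}d$ via \eqref{s1} and \eqref{s2}. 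Iterating \eqref{s3} over all $i$ together with $a\sqsubseteq_{_{\mathbf S}}d$ yields $b\sqsubseteq_{_{\mathbf S}}d$, which is clause ($\alpha$) for the singleton, so \eqref{mum} holds for $\precsim^Z$.

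For part (iii) the \emph{congruence} and \emph{join} computations are purely about the first coordinate and the union of second coordinates, so the verification that $\sim^Z$ respects $\vee$ and the derivation of the analogues of \eqref{3-1} and \eqref{30} are formally identical to Lemma \ref{corre2}(iii); I only need to check compatibility of clause (c2) under adding a common summand $(e,\{f_m\})$, and both ($\alpha$) and ($\beta$) witnesses survive because enlarging the right-hand second coordinate and first coordinate can only make (a1)-type inequalities easier, by isotony of $\le_{_{\mathbf S}}$ and \eqref{s1}. The remaining closure-operation properties of $K^Z$ (extensivity, idempotency, isotony) then follow exactly as in Claim \ref{cl2}, using the analogue of \eqref{30}. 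I expect the transitivity case where a ($\beta$)-witness meets a ($\beta$)-witness to be the only delicate point, and it is resolved precisely by invoking transitivity of the plain $\precsim$ on the auxiliary pairs $(z,\emptyset)$.
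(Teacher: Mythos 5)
Your overall architecture (reflexivity unchanged, a case analysis on ($\alpha$)/($\beta$) for transitivity, a $Z$-version of \eqref{mum} for well-definedness of $K^Z$, part (iii) unchanged) matches the paper's, and your well-definedness argument is essentially the paper's. But your transitivity argument has a genuine gap, and it sits exactly where the hypothesis that $Z$ consists of \emph{closures} must do real work --- a hypothesis you never invoke in that part of the proof. The (c1) half of transitivity is \emph{not} ``exactly the (a1) computation of Lemma \ref{corre2}(i)''. That computation absorbs each dangling $d_j^*$ into some $f_m^{**}\sqsubseteq_{_{\mathbf S}}f_m$ using the fact that every $d_j$ satisfies $d_j\sqsubseteq_{_{\mathbf S}}f_m$ for some $m$; in the $Z$-setting this fails for every $j$ witnessed by ($\beta$), where one only has $d_j\le_{_{\mathbf S}}z$ and $(z,\emptyset)\precsim(e,\{f_1,\dots,f_\ell\})$, so that $d_j^*\sqsubseteq_{_{\mathbf S}}z$ by \eqref{s1} and \eqref{s2} and the term $d_j^*$ cannot be placed below any single $f_m$. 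The paper's fix (Claim \ref{clcl}, equations \eqref{spamp} and \eqref{timeset}) is to upgrade $d_j^*\sqsubseteq_{_{\mathbf S}}z$ to the order inequality $d_j^*\le_{_{\mathbf S}}z\le_{_{\mathbf S}}e\vee_{_{\mathbf S}}f_1^{*j}\vee_{_{\mathbf S}}\dots\vee_{_{\mathbf S}}f_\ell^{*j}$ using the implication (iii)$\Rightarrow$(ii) of Remark \ref{psc}(e), which is legitimate precisely because $z$ is a closure; the witnesses $f_m^{*j}$ are then folded into the $f_m^{**}$. This passage from $\sqsubseteq z$ to $\le z$ is the central new idea of the proof and it is absent from your proposal.

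Two of your (c2) cases break for the same reason. In the ($\alpha$)-then-($\beta$) case you assert that $b_i\sqsubseteq_{_{\mathbf S}}d_j\le_{_{\mathbf S}}z$ gives $b_i\le_{_{\mathbf S}}z$ ``via \eqref{s1} and \eqref{s2}''; those axioms only yield $b_i\sqsubseteq_{_{\mathbf S}}z$, whereas clause ($\beta$) demands the order inequality $b_i\le_{_{\mathbf S}}z$, and again the only way to obtain it is Remark \ref{psc}(e) together with the fact that $z$ is a closure. In the ($\beta$)-first case you propose to compose $(z,\emptyset)\precsim(c,\{d_1,\dots,d_k\})$ with $(c,\emptyset)\precsim(e,\{f_1,\dots,f_\ell\})$ ``by the transitivity of the plain $\precsim$'', but these two relations do not chain: the middle pairs are $(c,\{d_1,\dots,d_k\})$ and $(c,\emptyset)$, which are different elements of $S\times S^{<\omega}$, and the relation you would actually need, namely $(c,\{d_1,\dots,d_k\})\precsim(e,\{f_1,\dots,f_\ell\})$ without the $Z$-superscript, is not available. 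What is needed here is precisely the paper's mixed composition lemma, Claim \ref{clcl} (together with Claim \ref{diam}): $(z,\emptyset)\precsim(c,\{d_1,\dots,d_k\})\precsim^Z(e,\{f_1,\dots,f_\ell\})$ implies $(z,\emptyset)\precsim(e,\{f_1,\dots,f_\ell\})$, and its proof is the closure argument described above. So the delicate point is not where you located it: it is not resolved by transitivity of $\precsim$, but by an argument that genuinely uses $Z\subseteq\{$closures$\}$, without which the lemma would fail.
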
 

\begin{proof} 
We shall point out the differences in comparison with 
the proof of Lemma \ref{corre2}. 

There is no change with respect to the proof
of reflexivity.
In order to continue the proof, we need some useful claims.
The next claim
 follows immediately from the definitions of
$\precsim$ and $\precsim^Z$.

\begin{claim} \labbel{diam}    $(a, \emptyset ) \precsim^Z 
(c,\{d_1,  \dots, d_k\})$ is
equivalent to 
$(a, \emptyset ) \precsim
(c,\{d_1,  \dots, d_k\})$. 
   \end{claim}  

\begin{claim} \labbel{clcl}
If $(a, \emptyset ) \precsim 
(c,\{d_1,  \dots, d_k\})\precsim^Z 
(e,\{f_1,  \dots, f_\ell\})$,
then
$(a, \emptyset ) \precsim^Z 
(e,\{f_1,  \dots, f_\ell\})$.
  \end{claim}  

The proof of Claim \ref{clcl} is similar to the proof of
transitivity of $\precsim$ in Lemma \ref{corre2},
but some additional arguments are needed.
Arguing as in  \ref{corre2},
we get equation \eqref{parap},
which we report below 
\begin{equation} \labbel{parapp}  
a \leq_{_{ \mathbf   S}}  e
 \vee_{_{ \mathbf   S}} f_1^* 
\vee_{_{ \mathbf   S}} \dots 
\vee_{_{ \mathbf   S}} f_\ell^*
 \vee_{_{ \mathbf   S}} d_1^* 
\vee_{_{ \mathbf   S}} \dots 
\vee_{_{ \mathbf   S}} d_k^* ,
 \end{equation} 
for some
$d_1^* \sqsubseteq_{_{ \mathbf   S}} d_1$, 
\dots\ 
and 
$f_1^* \sqsubseteq_{_{ \mathbf   S}} f_1$, 
\dots. 

Here it may happen that there is some $j \leq k$
such that, for no $m \leq \ell$, we have 
$ d_j \sqsubseteq_{_{ \mathbf   S}} f_m$, namely,  
we need to resort to clause $(\beta)$,
when witnessing  $(c,\{d_1,\dots, d_k\})\precsim^Z 
(e,\{f_1,  \dots, f_\ell\})$ for $j$ 
(notice: here $d_j$ is in place of $b_i$ in (c2),
since we are witnessing  $(c,\{d_1,\dots \})\precsim^Z 
(e,\{f_1,  \dots \})$). If $j$ is as above---call
 it a $(\beta)$-index---then,
by applying $(\beta)$,
we get  some $z \in Z$ such that 
$d_j \leq_{_{ \mathbf   S}} z$ and
  $(z, \emptyset ) \precsim 
(e,\{f_1,  \dots, f_\ell\})$.
By  Definition \ref{und2} (a1),
this means that 
$ z \leq_{_{ \mathbf   S}}
e \vee_{_{ \mathbf   S}}  f_1^{*j} 
\vee_{_{ \mathbf   S}} \dots \vee_{_{ \mathbf   S}} f_\ell^{*j}$, 
for some    $f_1^{*j},  \dots,  f_\ell^{*j}$ 
such that 
$f_1^{*j} \sqsubseteq _{_{ \mathbf   S}} f_1 , \dots,
f_\ell^{*j} \sqsubseteq _{_{ \mathbf   S}} f_\ell$. 
From 
$d_j^* \sqsubseteq_{_{ \mathbf   S}} d_j$
and $d_j \leq_{_{ \mathbf   S}} z$ we get
$d_j^* \sqsubseteq_{_{ \mathbf   S}} z$,
by \eqref{s1} and  \eqref{s2}.
From the implication 
(iii) $\Rightarrow $  (ii)   in Remark \ref{psc} (e)
it follows that  $d_j^* \leq_{_{ \mathbf   S}} z$,
since $z$ is a closure. Hence
\begin{equation} \labbel{spamp}
d_j^* \leq_{_{ \mathbf   S}} z \leq_{_{ \mathbf   S}}
e \vee_{_{ \mathbf   S}}  f_1^{*j} 
\vee_{_{ \mathbf   S}} \dots \vee_{_{ \mathbf   S}} f_\ell^{*j}, 
\end{equation}     

Now argue as in the proof of Lemma \ref{corre2},
this time setting   
\begin{equation} \labbel{timeset}     
 f_m^{**} =
f^*_m \vee_{_{ \mathbf   S}} \bigvee_{_{ \mathbf   S}}
 \{ \, d_j^* \mid j \leq k,  j \in D_m \, \} \vee_{_{ \mathbf   S}}
\bigvee_{_{ \mathbf   S}}
 \{ \, f_m^{*j} \mid  j \leq k, \text{ $j$  is a $(\beta)$-index} \, \} 
\end{equation} 
(or simply defining $f_m^{**}$ as in 
the proof of \ref{corre2}, in case there is no
 $(\beta)$-index).
We still have $f_m^{**} \sqsubseteq_{_{ \mathbf   S}} f_m$,
by iterating \eqref{s3}. 
On the other hand,
here $D_1 \cup \dots \cup D_ \ell $
is the set of those indices which are not $(\beta)$-indices.
However, we have
$d^*_j \leq_{_{ \mathbf   S}} 
e \vee_{_{ \mathbf   S}}  f_1^{**} 
\vee_{_{ \mathbf   S}} \dots 
\vee_{_{ \mathbf   S}} f_\ell^{**}$,
for every $(\beta)$-index $j$,   
by \eqref{spamp} and \eqref{timeset}. 
Hence
\begin{equation} \labbel{parapipp}   
 f_1^* 
\vee_{_{ \mathbf   S}} \dots 
\vee_{_{ \mathbf   S}} f_\ell^*
 \vee_{_{ \mathbf   S}} d_1^* 
\vee_{_{ \mathbf   S}} \dots 
\vee_{_{ \mathbf   S}} d_k^* 
\leq _{_{ \mathbf   S}}
e \vee_{_{ \mathbf   S}} 
f_1^{**} 
\vee_{_{ \mathbf   S}} \dots 
\vee_{_{ \mathbf   S}} f_\ell^{**},
 \end{equation}     
(compare \eqref{parapip})
  thus we still get 
$a \leq_{_{ \mathbf   S}} e  \vee_{_{ \mathbf   S}} f_1^{**} 
\vee_{_{ \mathbf   S}} \dots 
\vee_{_{ \mathbf   S}} f_\ell^{**}$, by \eqref{parapp}.      
This means that condition $(c1)$ holds, when applied to 
the relation $(a, \emptyset ) \precsim^Z 
(e,\{f_1,  \dots, f_\ell\})$.  Clause (c2)
 holds
by the conventions
about quantifying over the empty set.

Having proved Claim \ref{clcl},
we now can prove transitivity of
$\precsim^Z$. 

\emph{Transitivity of
$\precsim^Z$.} 
Assume that 
 \begin{equation} \labbel{trans}   
(a, \{b_1,  \dots, b_h\}) \precsim^Z 
(c,\{d_1,  \dots, d_k\})\precsim^Z 
(e,\{f_1,  \dots, f_\ell\}).
 \end{equation} 

By (c1) we have 
 $(a, \emptyset ) \precsim 
(c,\{d_1,  \dots, d_k\})$, 
hence 
 $(a, \emptyset ) \precsim 
(e,\{f_1,  \dots, f_\ell\})$,
by Claims \ref{diam} and  \ref{clcl}.
This shows that (c1) holds when witnessing
$(a, \{b_1, \allowbreak  \dots, \allowbreak b_h\}) \precsim^Z 
(e,\{f_1,  \dots, f_\ell\})$.  

It remains to prove (c2) witnessing 
$(a, \{b_1,  \dots, b_h \}) \precsim^Z 
(e,\{f_1,  \dots , f_\ell \})$. 
This is proved by cases, according 
to how (c2) is witnessed in both the 
relations in \eqref{trans}.

If, for some given $i \leq h$,  everything is witnessed as in ($\alpha$)
(which is the same as clause (a2) in Definition \ref{und2}),
say, $b_i \sqsubseteq_{_{ \mathbf   S}} d_j$
and $d_j \sqsubseteq_{_{ \mathbf   S}} f_m$, then
$b_i \sqsubseteq_{_{ \mathbf   S}} f_m$
by \eqref{s2}, as in the proof of Lemma \ref{corre2}(i),
hence ($\alpha$) in  (c2) is verified.

The \emph{new} cases in (c2) which are 
needed to prove $(a, \{b_1,  \dots, b_h\}) \precsim^Z 
(e,\{f_1,  \dots, f_\ell\})$
 are listed below as (*) and (**).

Case (*). For some index $i$,   (c2) for 
$(a, \{b_1,  \dots, b_h\}) \precsim^Z 
(c,\{d_1,  \dots, d_k\})$ is witnessed by
($\alpha$), say, by  $b_i \sqsubseteq_{_{ \mathbf   S}} d_j$,
while, for the index $j$,  
$(c, \{d_1,  \dots, d_k\}) \precsim^Z 
(e,\{f_1,  \dots, f_\ell\})$ 
is witnessed by ($\beta$),
thus 
$d_j \leq_{_{ \mathbf   S}} z$ and
  $(z, \emptyset ) \precsim 
(e,\{f_1,  \dots, f_\ell\})$,
for some $z \in Z$.

In this case, from 
$b_i \sqsubseteq_{_{ \mathbf   S}} d_j$
and $d_j \leq_{_{ \mathbf   S}} z$ we get
$b_i \sqsubseteq_{_{ \mathbf   S}} z$,
by \eqref{s1} and  \eqref{s2}.
From the implication 
(iii) $\Rightarrow $  (ii)   in Remark \ref{psc} (e)
it follows that  $b_i \leq_{_{ \mathbf   S}} z$,
since $z$ is a closure. Together with 
  $(z, \emptyset ) \precsim 
(e,\{f_1,  \dots, f_\ell\})$, this shows that 
clause $(\beta)$ holds for $b_i$,
when witnessing   
$(a, \{b_1,  \dots, b_h\}) \precsim^Z 
(e,\{f_1,  \dots, f_\ell\})$.

The remaining case to be treated is when:

Case (**). For some index $i$,  clause (c2) for 
$(a, \{b_1,  \dots, b_h\}) \precsim^Z 
(c,\{d_1,  \dots, \allowbreak  d_k\})$ is witnessed by
($ \beta $).

Thus 
$b_i \leq_{_{ \mathbf   S}} z$ and
  $(z, \emptyset ) \precsim 
(c,\{d_1,  \dots, d_k\})$,
for some  $z \in Z$.
From $ (c,\{d_1,  \dots, d_k\}) \precsim^Z 
(e,\{f_1,  \dots, f_\ell\})$, and Claims \ref{clcl}
and  \ref{diam}  we get 
  $(z, \emptyset ) \precsim 
(e,\{f_1,  \dots, f_\ell\})$,
thus 
$(a, \{b_1,  \dots, b_h\}) \precsim^Z 
(e,\{f_1,  \dots, f_\ell\})$ is witnessed by
($ \beta $) for $i$.

The proof of transitivity of 
$\precsim^Z$ is complete.
It follows from the properties of 
$\precsim^Z$ that $\sim^Z$
is an equivalence relation.
We now check that 

\emph{$K^Z$ is well-defined on the $\sim^Z$ equivalence classes.}  
Actually, we shall show that 
if  
$(a, \{b_1,  \dots, b_h\}) \precsim^Z
(c, \{d_1,  \dots, d_k\})$, then
$ (a, \{b\} ) \precsim (c, \{ d  \} )$
(the latter relation not $Z$-superscripted!),
where
$b= a \vee_{_{ \mathbf   S}} b_1 \vee_{_{ \mathbf   S}}
 \dots \vee_{_{ \mathbf   S}} b_h$ and
$d= c \vee_{_{ \mathbf   S}} d_1 \vee_{_{ \mathbf   S}} 
\dots \vee_{_{ \mathbf   S}} d_k  $.
This goes like the proof of Lemma \ref{corre2}(ii);
the only additional argument is given by the observation that 
if, for some index $i \leq h$,  condition 
$(\beta)$ applies, that is,  
 $b_i \leq_{_{ \mathbf   S}} z$ and
  $(z, \emptyset ) \precsim 
(c,\{d_1,  \dots, d_k\})$, thus, 
$ b_i \leq_{_{ \mathbf   S}} z \leq_{_{ \mathbf   S}}
c \vee_{_{ \mathbf   S}}  d_1^{**} 
\vee_{_{ \mathbf   S}} \dots \vee_{_{ \mathbf   S}} d_k^{**}$, 
with $d_1^{**} \sqsubseteq _{_{ \mathbf   S}} d_1 , \dots$,
then $ b_i \sqsubseteq d$, as usual, by \eqref{s1}, \eqref{s2}
and iterating \eqref{s3}.  Thus we get \eqref{pirl}
in this case, too, and, as in Lemma \ref{corre2}(ii), this is what is needed to  
complete the proof.

All the rest proceeds as in the proof of Lemma \ref{corre2}(iii). 
\end{proof}

   \begin{proof}[Proof of  Theorem   \ref{upres}]
Let $\widetilde{\mathbf S}^Z$ be the quotient
of $\widetilde{ S}^Z$   modulo 
$\sim^Z$ from Definition \ref{und2z}.
 By Lemma \ref{corre2Z}, $\widetilde{\mathbf S}^Z$
has a semilattice structure. The analogue of Claim \ref{cl2}
shows that $K^Z$ is a closure operation on $\widetilde{ S}^Z$,
thus $\widetilde{\mathbf S}^Z$ is a specialization semilattice 
with the usual definition 
$ [a,\{b_1, \dots\}]^Z \sqsubseteq^Z [c,\{d_1, \dots\}]^Z$ if
 $[a,\{b_1, \dots\}]^Z \leq^Z K^Z [c,\{d_1, \dots\}]^Z$. 

Define $\upsilon_{_\mathbf S}^Z:
S \to \widetilde{ S}^Z$ by
$\upsilon_{_\mathbf S}^Z(a) = [a, \emptyset ]^Z$.
As in the proof of Theorem \ref{univt2}, 
$\upsilon_{_\mathbf S}^Z$
is an embedding from
$ \mathbf S $ to $ \widetilde{\mathbf S}^Z$.

If $K_{_\mathbf S}a=z \in Z$, then
$(z, \emptyset) \precsim (a,a)$, since
$ z \sqsubseteq a$ (here we take $d_1=a$
and $d_1^*=z$ in Definition \ref{und2}(a1)). Moreover,
$ (a,a) \precsim^Z (z, \emptyset)$,
since $a \leq _{_\mathbf S} K_{_\mathbf S}a =z$,
and applying ($\beta$) in Definition \ref{und2z} with $a=b_1$.   
Henceforth
$[z, \emptyset]^Z =
[a,a]^Z$, thus 
$\upsilon_{_\mathbf S}^Z(K_{_\mathbf S}a)=
\upsilon_{_\mathbf S}^Z(z)=[z, \emptyset]^Z =
[a,a]^Z=K[a, \emptyset]^Z=K\upsilon_{_\mathbf S}^Z(a)$
This shows that 
$\upsilon_{_\mathbf S}^Z$ preserves closures in $Z$.  

Now we prove the $^Z$-analogue of 
clause (3) in Theorem \ref{univt2}.  
So, suppose that $\mathbf T$ is a 
principal specialization semilattice   
and $\eta: \mathbf S \to \mathbf T$  
is a homomorphism which preserves closures in $Z$. 
We could repeat the arguments in the proof of 
Theorem \ref{univt2}, but it is simpler to apply
the theorem itself.
In particular, Theorem \ref{univt2} provides
a homomorphism
$ \widetilde{\eta} : \widetilde{\mathbf S} \to \mathbf  T$
such that
$\eta = \upsilon_{_\mathbf S} \circ \widetilde{\eta}$. 
We can also apply Theorem \ref{univt2}
with 
 $ \widetilde{\mathbf S}^Z$ and
$\upsilon_{_\mathbf S}^Z$
in place of 
$\mathbf T$ and $\eta$, getting   
a surjective  homomorphism 
$\widetilde{\upsilon} :\widetilde{\mathbf S} \to \widetilde{\mathbf S}^Z$ 
such that 
$\upsilon_{_\mathbf S}^Z =\upsilon_{_\mathbf S} \circ \widetilde{\upsilon}$.
We shall show that $\widetilde{\eta}$ ``passes to the quotient''
inducing a homomorphism  
$\widetilde{\eta}^Z: \widetilde{\mathbf S}^Z \to \mathbf T$
making the following diagram commute:

\begin{equation*}
\xymatrix{
	{\mathbf S}  \ar[rdd]_{\eta}\ar[rd]^{\upsilon_{_\mathbf S}^Z}
 \ar[rr]^{\upsilon_{_\mathbf S}}
 &&\widetilde{\mathbf S}\ar[ld]_{\widetilde \upsilon}\ar[ldd]^{\widetilde{\eta}}
\\
&\widetilde{\mathbf  S}^Z \ar@{-->}[d]^{\hspace{-2.5pt}^{\widetilde{\eta}^Z}}
\\
	 &{\mathbf  T}}
   \end{equation*}    

Since $\widetilde{\upsilon}$ sends
$[a, \{b_1,  \dots, b_h\}]$ to
$\upsilon_{_\mathbf S}^Z (a, \{b_1,  \dots, b_h\})=
[a, \{b_1,  \dots, b_h\}]^Z$, 
 it is enough to check that if
$(a, \{b_1,  \dots, b_h\}) \sim^Z 
(c,\{d_1,  \dots,   d_k\})$,
then
$\widetilde{ \eta} (a, \{b_1,  \dots, b_h\}) =
\widetilde{\eta} (c,\{d_1,  \dots,   d_k\})$,
that is, 
$\eta (a) \vee_{_\mathbf T} K_{_\mathbf T} \eta (b_1) \vee_{_\mathbf T} \dots
 \vee_{_\mathbf T} K_{_\mathbf T} \eta (b_h)
= \eta (c) \vee_{_\mathbf T} K_{_\mathbf T} \eta (d_1) \vee_{_\mathbf T}
 \dots \vee_{_\mathbf T} K_{_\mathbf T} \eta (d_k)$,
recalling the definition of 
$\widetilde{\eta}$ from the proof of Theorem \ref{univt2}. 
Following the proof of Theorem \ref{univt2},
it is enough to show that 
\begin{equation}\labbel{pappi}    
K_{_\mathbf T} \eta (b_j) \leq_{_\mathbf T}
\eta (c) \vee_{_\mathbf T} K_{_\mathbf T} \eta (d_1) \vee_{_\mathbf T}
 \dots \vee_{_\mathbf T} K_{_\mathbf T} \eta (d_k),
   \end{equation}
when $j$ is a ($\beta$)-index, when witnessing
$(a, \{b_1,  \dots, b_h\}) \precsim^Z 
(c,\{d_1,  \dots,   d_k\})$. 
Recall the definition of  a ($\beta$)-index
from the proof of Claim \ref{clcl}. 
This means that $b_j \leq _{_\mathbf S} z$,
for some $z \in Z$, thus  
$ \eta (b_j) \leq _{_\mathbf T} \eta (z)$,
since $\eta$ is a homomorphism, hence
$K_{_\mathbf T} \eta (b_j) \leq _{_\mathbf T} K_{_\mathbf T}\eta (z)
= \eta (K_{_\mathbf S} z) = \eta (z)$,
since $z$ is a closure and, by assumption,  $\eta$ preserves
closures in $Z$. Since $j$ is a ($\beta$)-index,
we also have 
$(z, \emptyset ) \precsim 
(c,\{d_1,  \dots,   d_k\})$
and this implies  
$ \eta(z) \leq_{_\mathbf T}
\eta (c) \vee_{_\mathbf T} K_{_\mathbf T} \eta (d_1) \vee_{_\mathbf T}
 \dots \vee_{_\mathbf T} K_{_\mathbf T} \eta (d_k)$,
by the proof of Theorem \ref{univt2},
thus \eqref{pappi} follows.  

Everything else goes as in the proof of Theorem \ref{univt2}.
\end{proof}

\end{document}